\newtheorem{theorem}{Theorem}
\newtheorem*{defn}{Definition}
\newtheorem{corr}{Corollary}
\newtheorem{lemma}{Lemma}
\newtheorem{prop}{Proposition}
\theoremstyle{remark}
\theoremstyle{plain}
\theoremstyle{plain}
\newcommand{\Z}{\mathbb{Z}}
\DeclarePairedDelimiter\ceil{\lceil}{\rceil}
\DeclarePairedDelimiter\floor{\lfloor}{\rfloor}
\journal{ }
\begin{document}

\begin{frontmatter}

\title{Self-Assembling DNA Complexes with a Wheel Graph Structure}

\author[label1]{Gabriel Lopez}
\address[label1]{California State University, San Bernardino, San Bernardino, CA}
\ead{gabriel.lopez.x15@gmail.com}

\author[label1]{Cory Johnson}
\ead{corrine.johnson@csusb.edu}

\begin{abstract}
The Watson-Crick complementary properties of DNA make DNA a useful tool for the self-assembly of various target complexes. Concepts from graph theory can be used to model the self-assembling  process in which the vertices of the graph represent $k$-armed branched junction molecules, called tiles. We seek to determine the minimum number of tile and cohesive-end types necessary to create the desired self-assembled complex. Although results are known for a few infinite classes of graphs, many classes of graphs remain unsolved. We present results for the wheel graph within the restrictions of three different settings.
\end{abstract}

\begin{keyword}
graph theory \sep DNA self-assembly \sep flexible tile model \sep wheel graph
\end{keyword}

\end{frontmatter}


\section{Introduction}
\subsection{Self Assembling DNA Complexes}

Advancements in nanotechnology have allowed biologists to construct various complexes from genetic material. This is accomplished by utilizing the complementary properties of DNA molecules so that the molecules bond in a specific way. These complexes, while proven to be incredibly useful, can also be costly to synthesize in a lab. One of our objectives is to make the creation of these structures more efficient. 

 There are several applications of DNA self-assembly including targeted drug delivery as a treatment for various types of cancer, gene therapy, the formation of fine screen filters, biomolecular computing, and biosensors (see \cite{2, 43, labean2007constructing, han, 80}). Our focus has been on complexes formed from flexible $k$-armed branched junction DNA molecules. While some general results are known, there are many open problems in finding the most optimal construction. See \cite{ellis2014minimal} for overview of problem and summary of main results. The notion of optimizing the construction of these complexes comes from the fact that creating a complex will require a certain number of bond-edge types, as well as a certain number of tile types. We use concepts from graph theory to model this design process. We then optimize the construction of a target graph by providing an algorithm that will construct the graph with as few tile and bond-edge types as possible under the constraints of one of the three following scenarios:  

\begin{itemize}
\item \emph{Scenario 1:} A pot of tile types may realize other graphs of smaller order than the target graph (that is, a graph on fewer vertices) or nonisomorphic graphs of the same size as the target graph. 

\item \emph{Scenario 2:} A pot of tile types may realize graphs which are of the same order as the target graph, but are nonisomorphic. Graphs of smaller order than the target graph cannot be realized by the pot. 

\item \emph{Scenario 3:} A pot of tile types may not realize graphs of smaller order nor any nonisomorphic graphs of the same order as the target graph.  
\end{itemize}

We introduce the following notation for the minimum number of tile types and bond-edge types needed to construct a complex. 

\begin{defn}
Let $G$ be a target graph. We denote the minimum number of bond-edge types needed to construct $G$ in Scenario $i$ by $B_i(G),$ and the minimum number of tile types needed to construct $G$ in Scenario $i$ by $T_i(G).$
\end{defn}

\subsection{Concepts from Graph Theory}

A DNA complex can    be modeled using a graph, a mathematical object consisting of a nonempty set of nodes, called vertices, and a set of edges  which connect pairs of distinct vertices.

Take as example the DNA complex in Figure \ref{DNAcube} which takes the shape of a cube. This structure can be modeled by the graph in Figure \ref{cubepic}.

 \begin{figure}[h]
   \includegraphics[width=4cm]{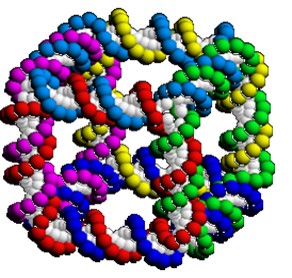}
    \centering
   \caption{A DNA complex.} \label{DNAcube}
    \end{figure}

 \begin{figure}[h!]
   \includegraphics[width=4cm]{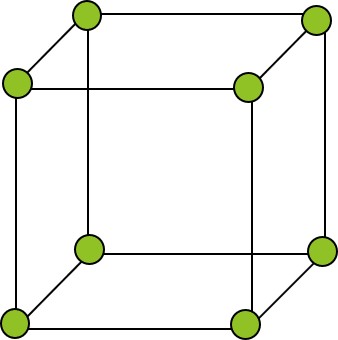}
    \centering
   \caption{The target graph associated with the complex in Figure \ref{DNAcube}.}\label{cubepic}  
    \end{figure}

The structure of a DNA complex becomes more easily studied as a graph rather than a complex made from genetic material. The $k$-armed DNA molecules that comprise the complex are modeled by objects called \emph{tiles}, which consist of a single vertex incident to $k$ half-edges. Each half-edge represents one arm of the molecule, with some paired sequence of DNA bases at the end. Since the end of each branch is unpaired, then in a lab environment each branch of the molecule seeks its complementary bond type to complete the sequence. We refer to a collection of tiles used to construct a graph as a \emph{pot} of tiles that \emph{realizes} the target graph. See an example of a branched molecule along with its corresponding tile in Figure \ref{Karm}. 
\begin{figure}[H]
	\centering
\includegraphics[trim=0 0 0 .5cm, clip, scale=.6]{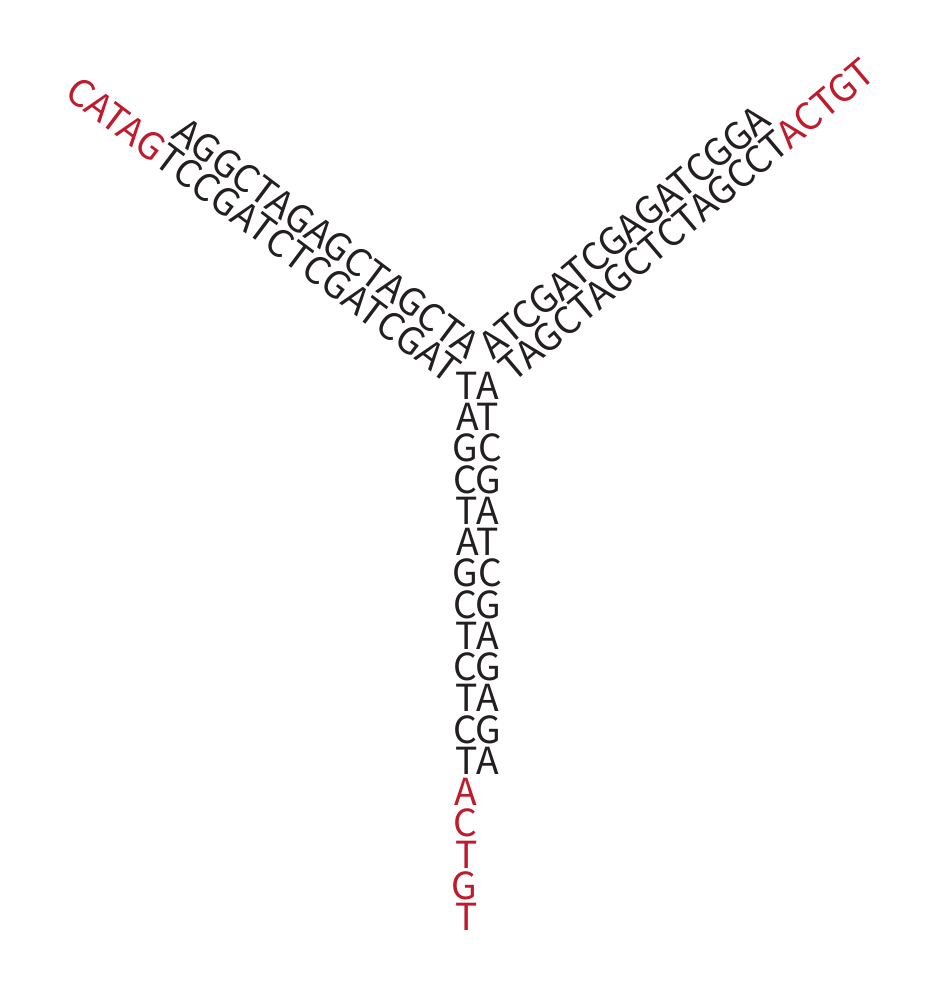} \hspace{.5in} \includegraphics[trim=0 0 0 8cm, clip, scale=.22]{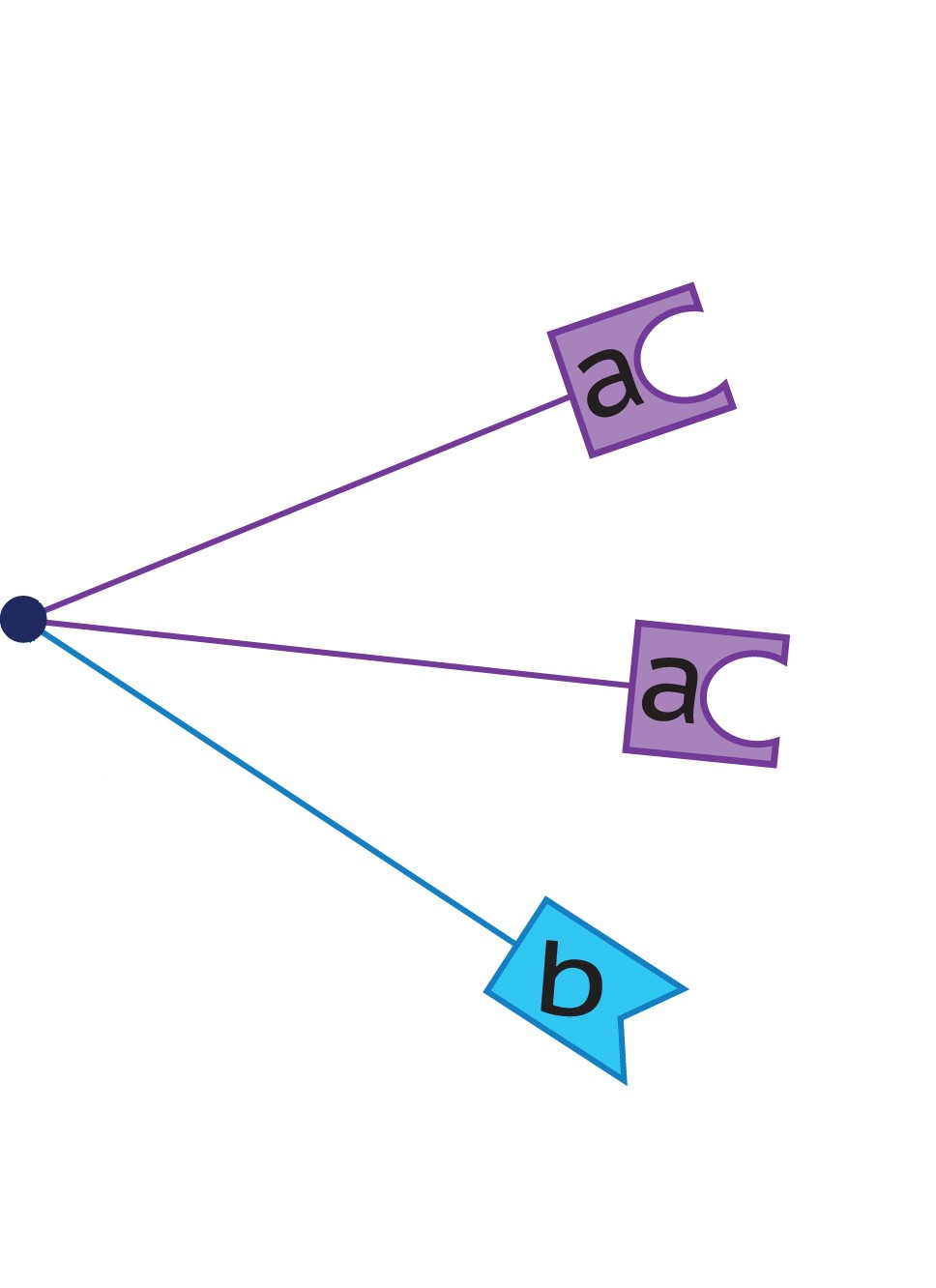}
	\caption{A 3-armed branched junction molecule (left) with example tile representation (right)}\label{Karm}
\end{figure}

Half edges are determined by their respective \emph{bond-edge types}, which we denote with some letter, with hatted and unhatted labels to denote pairs of complementary bond-edge types. For example, a half edge labeled $a$ must bond with a half edge labeled $\hat{a}$. In certain situations, it is useful to think of the half edges of each tile bonding together to form  directed edges. For the purpose of this paper, the edge will be directed from the vertex with the unhatted half edge to the vertex with the hatted half edge.  

Constructed using tiles rather than a plain graph, the cube complex in Figure \ref{DNAcube} could be represented by the graph on the left in Figure \ref{tilecube}. As a directed graph, the same complex may be constructed as seen on the right side of Figure \ref{tilecube}.

 \begin{figure}[h]
   \includegraphics[width=5cm]{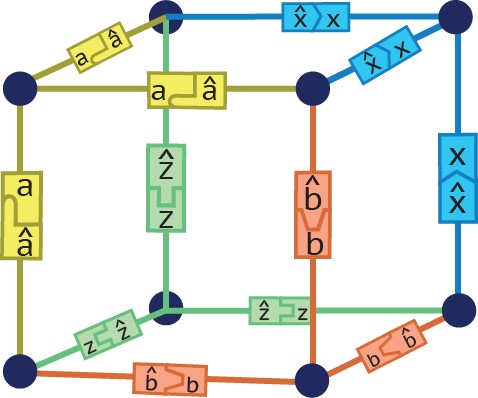} \hspace{.5in}
    \includegraphics[trim=0 1.6cm 0 0, clip, width=6cm]{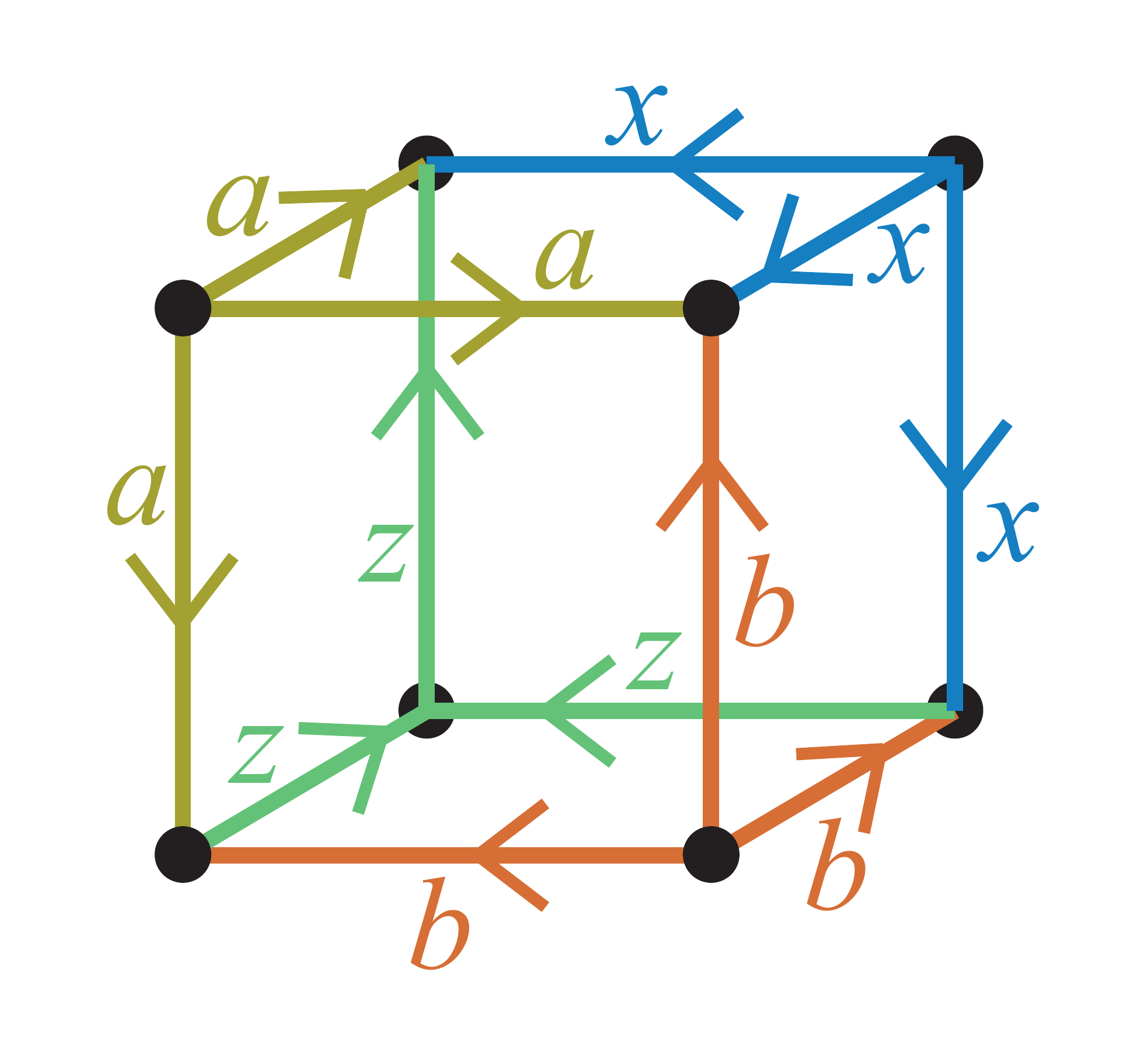}
    \centering
   \caption{A cube structure built with tiles (left), with equivalent digraph representation (right).}  \label{tilecube}
    \end{figure}

The process of modeling a self-assembling DNA complex with a (directed) graph is at the core of the work presented here. We seek to find a labeling of a target graph with a minimum alphabet of bond-edge types and a minimum set of distinct tiles. 

Shifting our focus from complexes to graphs, it is useful to have a few concepts from graph theory in mind.  We use the following graph theory concepts throughout this paper.

\begin{defn}
Let $G$ be a graph. We denote the number of different vertex degrees in $G$ by $av(G)$, the number of distinct even vertex degrees in $G$ by $ev(G)$, and the number of distinct odd vertex degrees by $ov(G).$
\end{defn}

\begin{defn}
A cycle in a graph $G$ that contains every vertex of $G$ is called a Hamilton cycle. Any graph which possesses a Hamilton cycle is called Hamiltonian.
\end{defn}

\subsection{The Construction Matrix}
We are able to use tools from linear algebra to aid us in verifying that a given pot of tiles has the desired properties. Specifically, there is a linear system that governs the ratio of tile and bond-edge types needed when constructing a target graph. This is due to the fact that for every half edge on every tile, there must be a complementary half edge of the same bond-edge type somewhere else in the pot. 

The following results are directly from \cite{ellis2014minimal} and are written here for later reference:

\begin{defn}
Let $P = \{t_1, \ldots, t_p\}$ be a pot of $p$ distinct tiles. We define $A_{i,j}$ to be the number of cohesive ends of type $a_i$ on tile $t_j$, and $\hat{A}_{i,j}$ to be the number of cohesive ends of type $\hat{a}_i$ on tile $t_j$. Let  $z_{i,j}$ be the net number of cohesive ends of type $a_i$ on tile type $t_j$, i.e. $z_{i,j}=A_{i,j}-\hat{A}_{i,j}$. Define $r_i$ to be the proportion of tile type $t_i$ to be used in the assembly process. \end{defn}

\begin{prop}\label{prop2}
Let $P=\{t_1,t_2,...t_p\}$ be a pot. Then the number of hatted cohesive ends of each bond type must equal the number of unhatted cohesive ends of the same type that appear in the construction of $G$. That is, for all $i$, $\sum_{j} r_jz_{i,j} = 0$.
\end{prop}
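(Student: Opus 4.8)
The plan is to argue by a direct double-counting of cohesive ends, exploiting the fact that in any complete assembly of $G$ no half-edge may be left unpaired. First I would fix an arbitrary bond-edge type $a_i$ and consider a realization of $G$ obtained from the pot $P$. Since the $r_j$ record the proportion of each tile type, I would introduce a total tile count $N$ so that exactly $n_j := r_j N$ copies of tile $t_j$ appear in the completed assembly; this converts the proportions into honest nonnegative integers that can be counted directly.

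Next I would tally the cohesive ends of each flavor across the whole complex. The number of unhatted ends of type $a_i$ is $\sum_j n_j A_{i,j}$ and the number of hatted ends of type $\hat a_i$ is $\sum_j n_j \hat A_{i,j}$, since tile $t_j$ contributes $A_{i,j}$ and $\hat A_{i,j}$ such ends respectively and there are $n_j$ copies of it. The key structural fact is that in a completed complex every half-edge is joined to form an edge, and by the complementarity rule an end labeled $a_i$ may bond only with an end labeled $\hat a_i$. This pairing is therefore a bijection between the set of unhatted $a_i$-ends and the set of hatted $\hat a_i$-ends taken over the entire assembly.

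Because a bijection forces the two sets to have equal cardinality, I would conclude
\[
\sum_j n_j A_{i,j} = \sum_j n_j \hat A_{i,j}.
\]
Substituting $n_j = r_j N$, dividing through by $N > 0$, and rearranging using the definition $z_{i,j} = A_{i,j} - \hat A_{i,j}$ yields $\sum_j r_j z_{i,j} = 0$. Since $a_i$ was an arbitrary bond type, the identity holds for every index $i$, which is exactly the claimed system.

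The argument is fundamentally a counting statement, so no individual step is technically deep. The main thing to be careful about is the passage from proportions to genuine integer counts, i.e. ensuring that the $r_j$ can be realized by a finite assembly (taking $N$ to be a common multiple clearing all denominators) so that the bijection on actual half-edges is well defined. The only genuinely load-bearing hypothesis is that the assembly is \emph{complete} --- that no dangling, unbonded half-edges remain --- since this is precisely what upgrades the pairing from an injection to a bijection and thereby forces equality of the two tallies.
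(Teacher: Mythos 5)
Your argument is correct. The paper itself offers no proof of this proposition---it is quoted verbatim from the cited reference \cite{ellis2014minimal} as background---and your double-counting argument (every unhatted $a_i$-end in a complete complex is paired bijectively with a hatted $\hat{a}_i$-end, so the integer tallies $\sum_j R_j A_{i,j}$ and $\sum_j R_j \hat{A}_{i,j}$ agree, and dividing by the total tile count gives $\sum_j r_j z_{i,j}=0$) is precisely the standard justification for it; note only that since $G$ is assumed already constructed, the integer counts $R_j$ exist first and the proportions are $r_j = R_j/N$ by definition, so the denominator-clearing caveat you raise at the end is not actually needed.
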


The following system of equations captures the requirements outlined in  Proposition \ref{prop2}.

\begin{align*}
    z_{1,1}r_1+z_{1,2}r_2+\dots+z_{1,p}r_p&=0 \\
    \vdots \\
    z_{m,1}r_1+z_{m,2}r_2+\dots+z_{m,p}r_p &=0 \\
    r_1+r_2+ \dots + r_p &= 1
\end{align*} 
The \emph{construction matrix} of $P$, denoted $M(P)$, is the corresponding augmented matrix:
\begin{equation}
    \begin{bmatrix}
     z_{1,1} & z_{1,2} & \dots & 0 \\
     \vdots & \vdots & \ddots & \vdots \\
     z_{m,1} & z_{m,2} & \dots & 0 \\
     1 & 1 & \dots & 1
    \end{bmatrix}.
\end{equation}

\begin{defn}
Given a pot $P$, we define $C(P)$ to be the set of graphs that can be constructed from $P$. 
The set of graphs of minimum order that may be constructed from $P$ is denoted $C_{min}(P)$. 
We write $m_P$ for the order of a smallest graph that may be constructed from $P$.
\end{defn}

\begin{prop}\label{prop3}
Let $P=\{t_1,t_2, \dots, t_p\}.$ Then:
\begin{enumerate}
    \item If a graph $G$ of order $n$ may be constructed from $P$, using $R_j$ tiles of type $t_j$, then $(1/n)\langle R_1,R_2,\dots,R_p\rangle$ is a solution of the construction matrix $M(P).$ 
    \item If $\langle r_1, \dots, r_p \rangle$ is a solution of the construction matrix $M(P)$, and there is a positive integer $n$ such that $nr_j \in \Z_{\geq0}$ for all $j$, then there is a graph of order $n$ that may be constructed using $nr_j$ tiles of type $t_j$.
    \item $m_p = \textrm{min}\{lcm\{b_j|r_j \neq 0 \textrm{ and } r_j = a_j/b_j\}, \textrm{where } \langle r_1, \dots, r_p \rangle \textrm{ is a solution to } M(P)\}$ where the minimum is taken over all solutions to $M(P)$ such that $r_j \geq 0$ and $a_j/b_j$ is in reduced form for all $j$.
\end{enumerate}
\end{prop}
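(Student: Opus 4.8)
The plan is to prove the three parts in order, since each rests on the balance condition of Proposition \ref{prop2}: for every bond type $a_i$, the unhatted and hatted $a_i$-ends in the construction must pair off exactly, so their counts agree.

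For part (1), I would start from a realization of $G$ on $n$ vertices using $R_j$ tiles of type $t_j$, noting $\sum_j R_j = n$ since each vertex is a single tile. Counting cohesive ends of type $a_i$ across the whole complex, the total number of unhatted ends is $\sum_j A_{i,j} R_j$ and the total number of hatted ends is $\sum_j \hat{A}_{i,j} R_j$. Because every edge of $G$ joins one unhatted end to its complementary hatted end, these totals are equal, so $\sum_j z_{i,j} R_j = 0$ for each $i$. Dividing by $n$ gives $\sum_j z_{i,j}(R_j/n) = 0$, and $\sum_j (R_j/n) = 1$; hence $(1/n)\langle R_1,\dots,R_p\rangle$ satisfies every row of $M(P)$.

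For part (2), I would set $R_j = n r_j \in \Z_{\ge 0}$. Since $\sum_j r_j = 1$, we get $\sum_j R_j = n$, so a prospective graph has $n$ vertices, one per tile used. Scaling the $i$-th equation by $n$ yields $\sum_j A_{i,j} R_j = \sum_j \hat{A}_{i,j} R_j$; that is, the collection of $R_j$ copies of each $t_j$ contains equally many unhatted and hatted ends of type $a_i$. I would then build the graph by choosing, for each $i$, an arbitrary bijection between the unhatted $a_i$-ends and the hatted $a_i$-ends, declaring each matched pair an edge (directed from the unhatted to the hatted end, per the earlier convention). Every half-edge is consumed, so this matching realizes a graph of order $n$ on the chosen tiles. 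For part (3), I would combine the first two parts as a minimization. Given any nonnegative solution $\langle r_1,\dots,r_p\rangle$ with $r_j = a_j/b_j$ in reduced form, the least $n$ making all $nr_j$ integral is $L = \mathrm{lcm}\{b_j : r_j \neq 0\}$, since zero entries impose no constraint. By part (2) a graph of order $L$ is constructible, so $m_P \le L$, and minimizing over solutions gives $m_P \le \min L$. Conversely, taking $G$ of minimum order $m_P$, part (1) yields a nonnegative solution with entries $R_j/m_P$ whose reduced denominators each divide $m_P$, so the corresponding lcm divides $m_P$ and $\min L \le m_P$. The two inequalities give equality.

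The step I expect to be most delicate is the realization in part (2): asserting that balanced end-counts actually yield a bona fide graph. The argument relies on the flexibility of the tile model, in which arms may be joined in any combination, so every bijection between complementary ends is admissible and no geometric or planarity obstruction arises; the statement asks only for \emph{some} graph of order $n$, so connectivity need not be forced. For part (3) the remaining care is bookkeeping: correctly excluding zero entries from the lcm and confirming the minimum is attained rather than merely approached, both of which hold because only finitely many denominators are relevant.
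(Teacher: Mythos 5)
This proposition is one the paper itself never proves: it appears in the Methods section under the remark that ``the following results are directly from \cite{ellis2014minimal},'' so there is no in-paper proof to compare against. Judged on its own, your argument is correct and is essentially the standard one for this result: part (1) is the double-count of hatted versus unhatted ends of each bond-edge type (i.e., Proposition \ref{prop2} applied to the multiset of tiles actually used), part (2) realizes a balanced collection of tiles by choosing any bijection between complementary cohesive ends, and part (3) plays the two directions off each other via reduced denominators. The only point that deserves to be made explicit is the one you flagged: the arbitrary matching in part (2) may produce loops, multiple edges, or even a disconnected complex, so the statement is true only under this model's convention that such objects count as constructed ``graphs.'' That convention is indeed in force here --- the paper's Lemma \ref{lemma3} explicitly allows for graphs with loops, and the proofs of Lemmas \ref{lemmaj} and \ref{cyclelemma} treat multigraphs obtained by re-matching cohesive ends as realizable complexes --- so your dismissal is justified, though a careful write-up would cite the convention rather than appeal to ``flexibility.'' One small repair to your part (3): the reason the minimum is attained is not that ``only finitely many denominators are relevant'' (the set of attainable lcm values over the solution polytope can be infinite), but simply that any nonempty set of positive integers has a least element; nonemptiness holds exactly when $C(P)\neq\emptyset$, which is implicit whenever $m_P$ is defined.
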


This paper describes results for target complexes that can be modeled with a wheel graph.

\subsection{Wheel Graphs}
A \emph{wheel graph} on $n$ vertices, denoted $W_n$, is a graph consisting of a cycle on $n-1$ vertices and an additional vertex (often called the \emph{hub}) which is adjacent to every vertex on the cycle. 
    \begin{figure}[h!]
   \includegraphics[width=4cm]{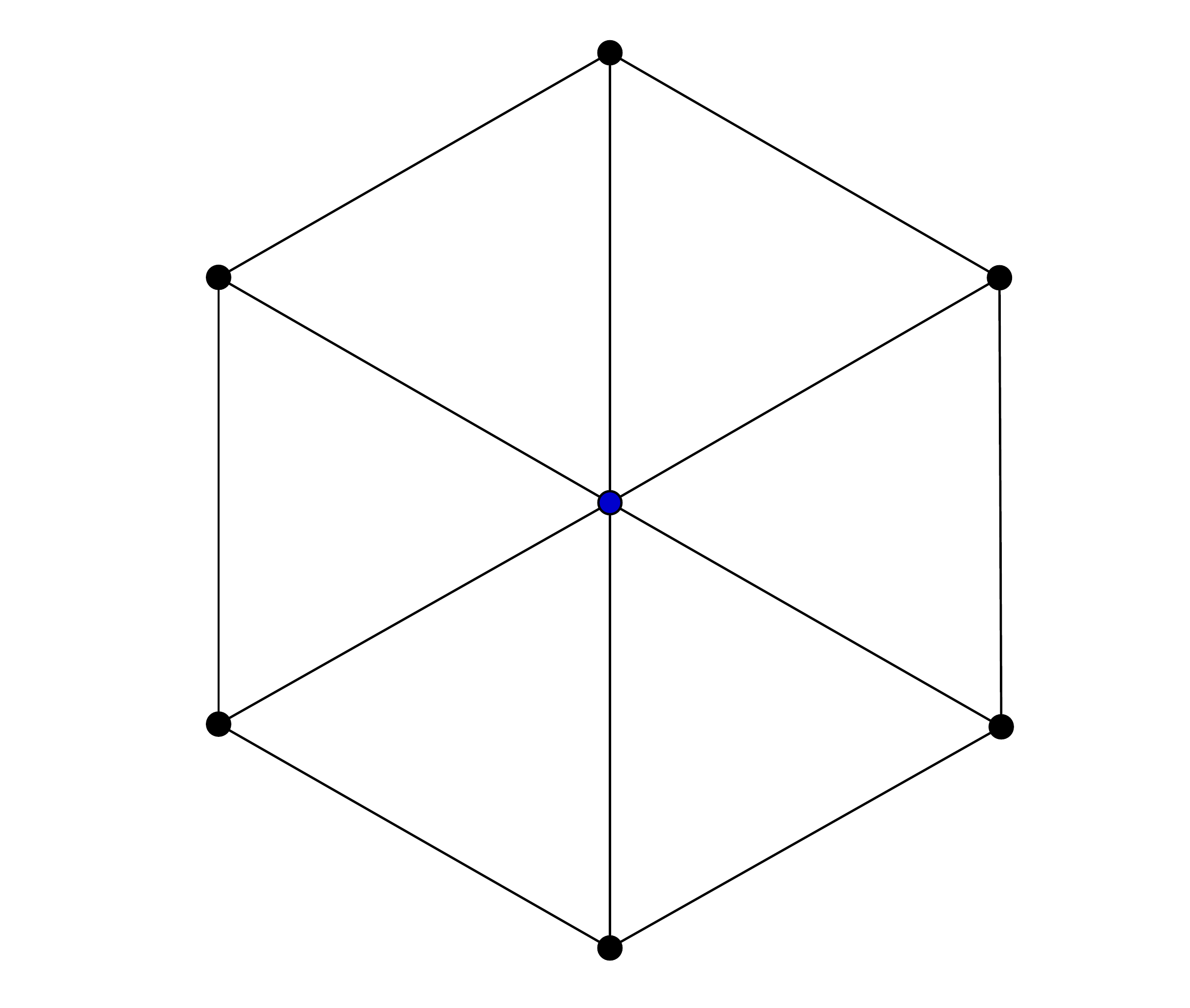}
    \centering
   \caption{The wheel graph $W_7$.}  
    \end{figure}
For the purpose of this paper, we refer to the $n-1$ cycle subgraph as the \emph{outer cycle}. The edges that connect the hub to the vertices on the outer cycle are called \emph{spokes}.  

A property of wheel graphs that we use later on in Section \ref{scen3results} is that they are Hamiltonian. One possible Hamilton cycle is shown in Figure \ref{hamiltonpic}.
\begin{figure}
    \centering
    \includegraphics[width=7cm]{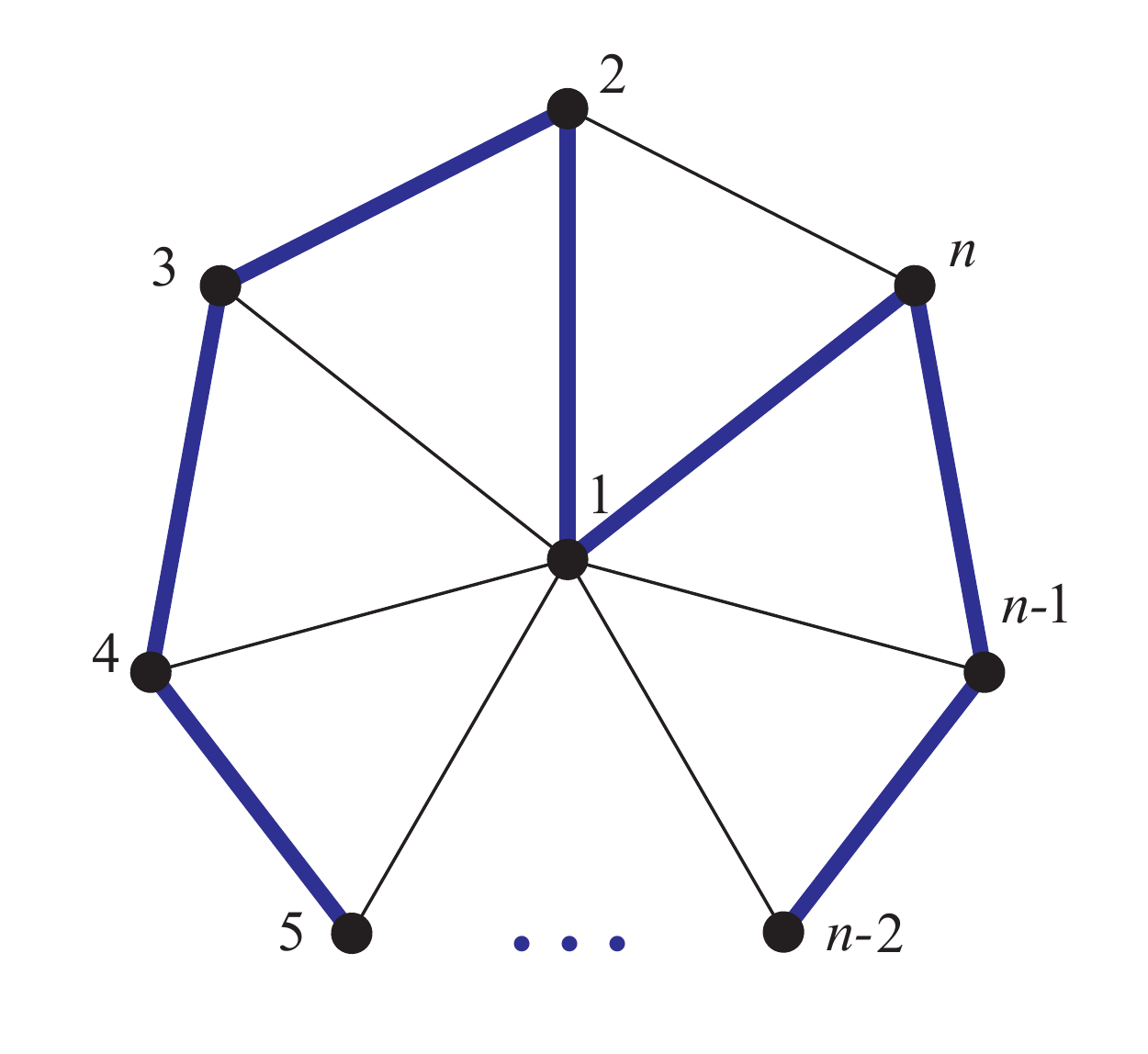}
    \caption{The Hamilton cycle in a wheel graph visits all $n$ vertices of $W_n$.}
    \label{hamiltonpic}
\end{figure}  
In the case of a wheel graph on 4 vertices, we note that $W_4$ is isomorphic to the complete graph on 4 vertices, $K_4.$ All results presented in this paper for the case where $n=4$ are consistent with the results for $K_4$ in \cite{ellis2014minimal}. Unless otherwise stated, this paper assumes any wheel graph  has order greater than 4; that is, we assume $n > 4$.  

\section{Methods}
 Previous authors have provided a number of useful theorems that we use to prove new results on the construction of wheel graphs. The following are results from \cite{ellis2014minimal}:

 \begin{theorem}\label{thm1}
 $av(G)\leq T_1(G) \leq ev(G)+2ov(G)$
 \end{theorem}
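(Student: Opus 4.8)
The plan is to prove the two inequalities separately, treating the upper bound as the substantive part.

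For the lower bound $av(G)\le T_1(G)$, I would use the basic correspondence between a tile and the vertex it represents. A tile with $k$ half-edges can only occupy a vertex of degree $k$, since each half-edge becomes exactly one incident edge. Hence two vertices of different degree can never be assigned the same tile type, so any pot realizing $G$ must contain at least one tile type for each distinct vertex degree. Since $G$ has exactly $av(G)$ distinct degrees, at least $av(G)$ tile types are required, giving $av(G)\le T_1(G)$.

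For the upper bound, the plan is to exhibit an explicit pot realizing $G$ with at most $ev(G)+2ov(G)$ tile types, using a single bond-edge type $a$. Because we work in the flexible tile model and in Scenario 1 (where the pot is allowed to realize other graphs), it suffices to produce a combinatorial assignment of tiles to the vertices of $G$ together with a valid pairing of complementary half-edges. Orienting each edge of $G$ from its $a$-end to its $\hat a$-end, the tile occupying a vertex $v$ is determined entirely by the pair $(d^+(v),d^-(v))$ of out- and in-degrees under that orientation, and two vertices receive the same tile type exactly when these pairs agree. The goal is therefore to choose an orientation of $G$ that makes the set of occurring pairs as small as possible.

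The key step is to show that $G$ admits an orientation in which every even-degree vertex $v$ satisfies $d^+(v)=d^-(v)=\deg(v)/2$ and every odd-degree vertex satisfies $|d^+(v)-d^-(v)|=1$. I would obtain this by the standard Eulerian argument: add a set of auxiliary edges pairing up the (necessarily even number of) odd-degree vertices so that every vertex of the resulting multigraph has even degree, take an Eulerian orientation on each connected component (which balances in- and out-degree at every vertex), and then delete the auxiliary edges. Deleting an oriented auxiliary edge shifts the in/out balance by exactly one at each of its two endpoints, so every odd-degree vertex of $G$ ends with $|d^+-d^-|=1$ while the even-degree vertices remain balanced. Under this orientation each even degree $d$ contributes the single tile composition with $d/2$ unhatted and $d/2$ hatted ends, and each odd degree $d=2m+1$ contributes at most the two compositions $(m+1,m)$ and $(m,m+1)$; summing gives at most $ev(G)+2ov(G)$ tile types.

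The main obstacle is this middle step — guaranteeing the balanced/near-balanced orientation and confirming that it genuinely realizes $G$ rather than merely satisfying the counting constraints of Proposition \ref{prop2}. I expect the care to lie in handling the auxiliary edges cleanly (they may introduce multi-edges, so they are best treated as an abstract multigraph in which Eulerian orientations still exist) and in checking that after deletion the remaining orientation is an orientation of $G$ itself, whose outgoing and incoming half-edge counts at each vertex match one of the listed tile types. Once the orientation is in hand, the pot is read off directly and the bound follows.
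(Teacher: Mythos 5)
Your proposal is correct and is essentially the canonical proof of this result: the paper does not prove Theorem \ref{thm1} itself but quotes it from \cite{ellis2014minimal}, where the argument is exactly yours — the degree-counting observation for the lower bound, and for the upper bound a single bond-edge type together with a near-balanced orientation obtained by pairing up the odd-degree vertices, taking an Eulerian orientation of the resulting even multigraph, and deleting the auxiliary edges, so that each even degree yields one tile type and each odd degree at most two. Your attention to the auxiliary-edge multigraph issue and to the fact that the oriented labeling genuinely realizes $G$ (which is all Scenario 1 requires) is exactly the right care, and nothing is missing.
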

 
 Theorem \ref{thm1} tells us that in Scenario 1, the number of distinct tile types needed to construct a target graph is bounded below by the number of distinct vertex degrees that are present in the graph, and bounded above by the sum of the number of distinct even degrees and twice the number of distinct odd degrees present in the graph. The lower bound holds for $T_2(G)$ and $T_3(G)$ as well. \newline

 We also take advantage of the following preposition regarding the hierarchy of tile type and bond-edge type minima between scenarios.
 \begin{prop} \label{hierarchy}
 $B_1(G)\leq B_2(G)\leq B_3(G)$ and  $T_1(G)\leq T_2(G)\leq T_3(G)$.
 \end{prop}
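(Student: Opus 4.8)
The plan is to recognize that Proposition \ref{hierarchy} is purely a statement about nested feasible sets of pots and requires no computation. For a target graph $G$ of order $n$, I would first introduce, for each scenario $i$, the collection $\mathcal{P}_i(G)$ of all pots that are \emph{admissible} for constructing $G$ under the rules of Scenario $i$. Explicitly, $\mathcal{P}_1(G)$ consists of every pot $P$ with $G \in C(P)$; $\mathcal{P}_2(G)$ consists of those pots $P$ with $G \in C(P)$ that additionally realize no graph of order smaller than $n$ (equivalently, $m_P = n$); and $\mathcal{P}_3(G)$ consists of those pots $P$ with $G \in C(P)$ that realize neither a graph of smaller order nor a nonisomorphic graph of the same order $n$. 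With these definitions, $B_i(G)$ is the minimum number of bond-edge types taken over all $P \in \mathcal{P}_i(G)$, and $T_i(G)$ is the minimum number of tile types over the same set.

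The central step is to verify the containments $\mathcal{P}_3(G) \subseteq \mathcal{P}_2(G) \subseteq \mathcal{P}_1(G)$. This follows directly by comparing the defining constraints: each scenario imposes every restriction of the previous one, and possibly more. Any $P \in \mathcal{P}_3(G)$ realizes $G$ and realizes no smaller graph, so it meets the requirements of Scenario 2 and hence lies in $\mathcal{P}_2(G)$; any $P \in \mathcal{P}_2(G)$ realizes $G$, which is the sole requirement of Scenario 1, so it lies in $\mathcal{P}_1(G)$. No properties of wheel graphs are needed here, and the argument is valid for an arbitrary target graph.

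To finish, I would invoke the elementary fact that the minimum of a fixed real-valued function over a nonempty set cannot decrease when the set is replaced by a nonempty subset. Applying this to the bond-edge-count function along the chain $\mathcal{P}_3(G) \subseteq \mathcal{P}_2(G) \subseteq \mathcal{P}_1(G)$ yields $B_1(G) \leq B_2(G) \leq B_3(G)$, and applying it to the tile-count function yields $T_1(G) \leq T_2(G) \leq T_3(G)$.

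I do not anticipate a genuine obstacle in the mathematics; the only point demanding care is the faithful translation of the prose descriptions of Scenarios 1--3 into precise set-membership conditions, so that the three containments are unambiguous. A secondary point worth noting is well-definedness: each $B_i(G)$ and $T_i(G)$ is a minimum over a nonempty set, which holds because the most restrictive scenario already admits at least one pot realizing $G$, guaranteeing all three feasible sets are nonempty.
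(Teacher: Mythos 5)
Your argument is correct, but note that the paper itself offers no proof of this proposition: it is imported verbatim from \cite{ellis2014minimal} as a known result, so there is no in-paper argument to compare against. Your nested-feasible-sets formulation --- $\mathcal{P}_3(G) \subseteq \mathcal{P}_2(G) \subseteq \mathcal{P}_1(G)$, followed by the observation that a minimum can only grow when the feasible set shrinks --- is the natural and standard justification, and it is essentially how the hierarchy is understood in the source literature. Each scenario's defining constraints strictly contain those of the previous scenario, so the containments are immediate, and the monotonicity of minima does the rest. The one point deserving slightly more care than you give it is the well-definedness claim at the end: your assertion that Scenario 3 always admits at least one pot is justified for connected targets (such as $W_n$) by the trivial pot assigning each vertex its own tile type and each edge its own bond-edge type, which forces the unique realization $G$; for disconnected targets this pot can realize a proper component as a smaller graph, so either one restricts attention to connected graphs (as this paper and \cite{ellis2014minimal} implicitly do) or one adopts the convention that a minimum over an empty set is $+\infty$, under which the inequalities still hold vacuously. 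With that caveat made explicit, your proof is complete and self-contained.
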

 
 The next theorem gives the minimum number of tile and bond-edge types needed to construct the cycle graph on $n$ vertices, $C_n$, in Scenario 3. This result is useful since the cycle graph is a subgraph of the wheel graph.

 \begin{prop}\label{cycle3}
 $B_3(C_{n})=\ceil*{\frac{n}{2}}$ and $T_3(C_{n})=\ceil*{\frac{n}{2}}+1$.
 \end{prop}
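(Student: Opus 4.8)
The plan is to establish matching lower and upper bounds for each of $B_3(C_n)$ and $T_3(C_n)$. Throughout I orient every assembled edge from its unhatted to its hatted half-edge, so that a realization of $C_n$ is an oriented cycle, and I repeatedly invoke the following \emph{rebonding principle}: if two edges of an assembled complex carry the same bond-edge type $a_i$, I may detach the two half-edges and re-pair the unhatted $a_i$-ends with the hatted $a_i$-ends in the opposite way, producing another complex assembled from the same pot. Tracing the arcs of the cycle shows that, depending on whether the two edges agree or disagree with a fixed traversal, this re-pairing either splits the cycle into two strictly shorter cycles or returns a single $n$-cycle. This principle is the engine for both the lower bound on bond-edge types and the verification of Scenario $3$ for my constructions.

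For the lower bound $B_3(C_n)\geq \lceil n/2\rceil$, I would show that in a Scenario-$3$ realization no bond-edge type appears on three or more edges. Given three edges of a common type, a transposition of two of their hatted ends re-pairs the half-edges so that one arc of the cycle closes up on its own; this arc is balanced in every bond-edge type, hence is itself a realizable complex of order strictly less than $n$, which Scenario $3$ forbids. So each of the $n$ edges spends one of the bond-edge types at most twice, forcing at least $\lceil n/2\rceil$ types.

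For the upper bounds I would exhibit explicit pots. For even $n=2m$, take bond-edge types $a_1,\dots,a_m$ and the $m+1$ tiles $\{a_1,a_1\}$, the through-tiles $\{\hat a_i,a_{i+1}\}$ for $1\le i\le m-1$, and $\{\hat a_m,\hat a_m\}$; for odd $n=2m+1$, take $a_0,\dots,a_m$ and the $m+2$ tiles $\{a_0,a_1\}$, $\{\hat a_0,a_1\}$, the through-tiles $\{\hat a_i,a_{i+1}\}$ for $1\le i\le m-1$, and $\{\hat a_m,\hat a_m\}$. In each case I would write the construction matrix and solve it via Proposition \ref{prop3}: the balance equations chain the tile proportions together so that every solution is an integer multiple of a single primitive vector whose entries sum to $n$, so by Proposition \ref{prop3} we get $m_P=n$ and no complex of smaller order is realizable, while the bond-edge and tile counts are exactly $\lceil n/2\rceil$ and $\lceil n/2\rceil+1$. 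A short matching argument on the forced bondings — the two arms of the double-source feed the two parallel strands that the double-sink ties back together — shows every order-$n$ assembly is a single $n$-cycle, ruling out nonisomorphic graphs of the same order and confirming these pots satisfy Scenario $3$.

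The remaining and hardest step is the lower bound $T_3(C_n)\geq \lceil n/2\rceil+1$, i.e.\ that $\lceil n/2\rceil$ tile types never suffice. Counting does not settle this: with only $\lceil n/2\rceil$ types the construction matrix can still admit a primitive solution summing to $n$ (for example the ratio $2{:}2{:}1$ for $C_5$), so the obstruction must be rigidity rather than order. I would assume a Scenario-$3$ pot for $C_n$ with just $\lceil n/2\rceil$ tile types, read off the forced multiplicities from its construction matrix, and argue that the resulting multiset of half-edges is too symmetric to assemble uniquely, always admitting an alternative pairing that produces either a shorter realizable cycle or a nonisomorphic graph on $n$ vertices. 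Pinning down this case analysis — together with checking that the odd construction above genuinely avoids every short cycle and multigraph — is where I expect the real difficulty to lie.
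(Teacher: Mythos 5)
You should first be aware that the paper does not prove Proposition \ref{cycle3} at all: it is imported verbatim from \cite{ellis2014minimal} as a known tool, so there is no in-paper proof to compare against, and your proposal has to stand on its own. Most of it does. Your rebonding dichotomy (co-oriented same-type edges split the cycle into two shorter balanced complexes; anti-oriented ones return an $n$-cycle) is the correct engine for the bound $B_3(C_n)\geq\lceil n/2\rceil$ --- just state explicitly that among three edges of one type, two must be co-oriented by pigeonhole, and those are the two whose hatted ends you transpose. Your even and odd pots are also correct and are the standard constructions; the construction matrices chain the proportions exactly as you claim. One small subtlety in invoking Proposition \ref{prop3}(3): for even $n$ the entry $2/n$ reduces to $1/(n/2)$, so the minimum order comes out to $n$ only because of the two tiles used in proportion $1/n$; the lcm computation should be done with reduced fractions.

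The genuine gap is the one you name yourself: the lower bound $T_3(C_n)\geq\lceil n/2\rceil+1$ is never argued, and ``too symmetric to assemble uniquely'' is a hope, not a proof --- yet it is the only part of the proposition that does not follow from what you wrote. What is missing has two pieces. First, no tile type can be used three times: three copies of a tile $\{x,y\}$ contribute at least three half-edges of the bond type underlying $x$, hence at least three edges of that type, contradicting your own bond bound; this gives each tile multiplicity at most two, but only yields $t\geq\lceil n/2\rceil$. Second, to get the $+1$ you must rule out the extremal configurations (every tile used exactly twice when $n$ is even; all but one used twice when $n$ is odd), and this requires a propagation/parity argument, not a counting one: if a tile $\{x,y\}$ occupies vertices $u$ and $v$, the two occurrences must be oppositely oriented along the cycle (else your rebonding splits it); then the only two edges of the type underlying $y$ are the forward edge at $u$ and the backward edge at $v$, which forces the successor of $u$ and the predecessor of $v$ to carry a common tile type, again oppositely oriented; iterating folds each of the two arcs between $u$ and $v$ onto itself. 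A fixed point of this folding is a tile type used only once, and an adjacent folded pair is a tile type on adjacent vertices, violating Lemma \ref{lemma3}; since the two arcs contain $n-2$ vertices in total, a parity count shows one of these failures always occurs, in both the even and odd cases. This is precisely the argument the paper runs on the outer cycle of the wheel in the proof of Theorem \ref{scen3tiles} (see Figure \ref{Cyclepathfig}), so adapting that proof to $C_n$ is the concrete way to close your gap.
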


 The following result is another lemma that we use in the proof of one of our theorems.

 \begin{lemma} \label{lemma3}
  In Scenario 3, if $P$ is a pot such that $\{G\} = C_{min}(P)$, and $G$ has no loops, then no tile type $t \in P$ may be used for two adjacent vertices in $G$.   
 \end{lemma}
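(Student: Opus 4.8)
The plan is to argue by contradiction. I would assume that some tile type $t \in P$ is assigned to two adjacent vertices $u$ and $v$ in a realization of $G$, and then exhibit a second graph $G'$ that is also realizable from $P$, has the same order as $G$, but is not isomorphic to $G$. Since $G$ has order $m_P$ and $\{G\} = C_{min}(P)$, producing such a $G'$ — which would then also lie in $C_{min}(P)$ — contradicts the hypothesis; it likewise directly violates the Scenario 3 prohibition against realizing a nonisomorphic graph of the same order.

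First I would extract the cohesive ends forced by the adjacency. The edge $uv$ is realized by a half-edge of some type, say $a$, on one endpoint and its complement $\hat a$ on the other; without loss of generality $u$ contributes the $a$-arm (call it $a_u$) and $v$ the $\hat a$-arm ($\hat a_v$). Because $u$ and $v$ are copies of the same tile $t$, the tile $t$ carries both an $a$-arm and an $\hat a$-arm; in particular $u$ itself possesses an $\hat a$-arm $\hat a_u$, which in the realization of $G$ must be bonded to an $a$-arm of some vertex $w$, say $a_w$.

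The key step is a purely local re-matching of cohesive ends that leaves every tile — and hence the order and the multiset of tiles used — unchanged. In $G$ the two relevant bonds are $a_u \leftrightarrow \hat a_v$ (the edge $uv$) and $\hat a_u \leftrightarrow a_w$. I would replace these by $a_u \leftrightarrow \hat a_u$ and $a_w \leftrightarrow \hat a_v$, leaving every other bond of the complex untouched. This is again a pairing of complementary ends, so it defines a valid graph $G'$ realizable from the same tiles; moreover $G'$ has the same number of vertices as $G$, and the new bond $a_u \leftrightarrow \hat a_u$ is a loop at $u$. Since $G$ has no loops, $G' \not\cong G$, yet both have order $m_P$, so $G' \in C_{min}(P) \setminus \{G\}$, the desired contradiction.

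The main obstacle is simply to confirm that the re-matching is always legitimate and genuinely produces a new graph, so I would check the degenerate configurations. If the partner vertex $w$ equals $v$ (i.e.\ there is a second edge between $u$ and $v$), the swap instead yields loops at both $u$ and $v$, which is still a loop-bearing, hence nonisomorphic, graph. I would also note that $\hat a_u$ cannot already be bonded to an $a$-arm of $u$ itself, since that would place a loop in the loop-free $G$; this guarantees that $\hat a_u$ has a partner available for the swap. The heart of the argument is the observation that reusing a tile across an edge forces complementary cohesive ends onto a single tile, which can always be folded into a loop without disturbing the rest of the assembly.
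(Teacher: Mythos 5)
The paper does not actually prove this lemma: it is imported from \cite{ellis2014minimal} as a known result, so there is no in-paper argument to compare yours against. Your reconstruction is the standard proof of this fact, and it is essentially correct: reusing a tile on adjacent vertices forces the complementary pair $a,\hat{a}$ onto a single tile, and the two bonds $a_u \leftrightarrow \hat{a}_v$ and $\hat{a}_u \leftrightarrow a_w$ can be re-matched to $a_u \leftrightarrow \hat{a}_u$ and $a_w \leftrightarrow \hat{a}_v$, producing a complete complex of the same order containing a loop, hence nonisomorphic to the loop-free $G$. Your treatment of the degenerate configurations ($w = v$ giving loops at both vertices, and the impossibility of $\hat{a}_u$ being bonded to $u$ itself) is also correct.

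The one point you should tighten is connectivity. The re-matching can disconnect the complex: if $uv$ and $uw$ were the only edges joining $u$ to the rest of $G$, then $G'$ consists of an isolated looped vertex together with a separate component. If realizable graphs are required to be connected --- as physical DNA complexes are, and as the target graphs in this setting implicitly are --- then this $G'$ need not lie in $C(P)$, and the contradiction as you state it does not immediately follow. The fix costs one sentence: every connected component of the re-matched complex has all of its cohesive ends joined internally, so each component is itself a complete complex realizable from $P$; a disconnected outcome therefore yields a realizable graph of order strictly less than $m_P$, contradicting the minimality of $G$ just as decisively as the loop does in the connected case.
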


 \section{Results}
 
 \subsection{Wheel Graphs in Scenario 1 and Scenario 2}
We have developed an algorithm to construct wheel graphs in Scenarios 1 and 2 in the most efficient way possible. We use a pot of two tile types and a single bond-edge type to construct a wheel graph of any order. 

The pot we propose is given below:
\begin{equation}\label{PotforScen1and2}
    P= \{t_1, t_2\},
\end{equation}
where 
$$t_1=\{a,\hat{a}^2\}, \textrm{and } t_2=\{a^{n-1}\}.$$
This leads us to our first theorem.

\begin{theorem}
Let $W_n$ be a wheel graph on $n$ vertices. Then $T_1(W_n)=2$ and $B_1(W_n)=1$.
\end{theorem}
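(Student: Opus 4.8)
The plan is to prove both equalities by sandwiching each quantity between matching lower and upper bounds. For the lower bounds, I would first record the degree sequence of $W_n$: the hub has degree $n-1$, while each of the $n-1$ outer-cycle vertices has degree $3$. Since $n > 4$ forces $n-1 > 3$, these two degrees are distinct, so $av(W_n) = 2$. Theorem \ref{thm1} then gives $T_1(W_n) \ge av(W_n) = 2$. The bond-edge lower bound $B_1(W_n) \ge 1$ is immediate, since any construction of a graph with at least one edge requires at least one bond-edge type.

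For the upper bounds, I would exhibit the proposed pot $P = \{t_1, t_2\}$ with $t_1 = \{a, \hat{a}^2\}$ and $t_2 = \{a^{n-1}\}$, which uses only the single bond-edge type $a$ and only two tile types, so it suffices to verify that $P$ actually realizes $W_n$. First I would confirm feasibility via the construction matrix of Proposition \ref{prop3}: the net $a$-counts are $z_{1,1} = 1 - 2 = -1$ for $t_1$ and $z_{1,2} = n-1$ for $t_2$, so solving $-r_1 + (n-1)r_2 = 0$ together with $r_1 + r_2 = 1$ yields $r_1 = (n-1)/n$ and $r_2 = 1/n$. Taking $n$ as the scaling integer, part (2) of Proposition \ref{prop3} guarantees a graph of order $n$ built from $n-1$ copies of $t_1$ and one copy of $t_2$.

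The crux is then to show that this particular tile assignment yields $W_n$ specifically. I would assign tile $t_2$ to the hub and tile $t_1$ to each of the $n-1$ cycle vertices, then describe an explicit bonding of half-edges. Each of the hub's $n-1$ unhatted $a$ ends bonds with one $\hat{a}$ end of a distinct cycle vertex, producing the $n-1$ spokes. Each cycle vertex then has one remaining $\hat{a}$ end and its single $a$ end; orienting the outer cycle and bonding each vertex's $a$ end to the next vertex's leftover $\hat{a}$ end closes the cycle. A half-edge count (there are $2(n-1)$ ends of type $a$ and $2(n-1)$ of type $\hat{a}$) confirms the matching is consistent. Because we are in Scenario 1, the pot is permitted to realize other graphs as well, so no further verification is required beyond exhibiting one valid assembly of $W_n$.

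The main obstacle I anticipate is precisely this last bonding verification: one must check that the two $\hat{a}$ ends and the one $a$ end on each copy of $t_1$ can be simultaneously matched so that exactly one $\hat{a}$ end goes to a spoke while the remaining $a$ and $\hat{a}$ ends knit together into a single consistent directed outer cycle, rather than into several shorter cycles or a mismatched configuration. This is a routine matching-and-orientation argument rather than a deep difficulty, but it is where the actual content of the theorem lies, since the lower bounds and the construction-matrix check are essentially immediate.
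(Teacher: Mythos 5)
Your proposal is correct and follows essentially the same route as the paper: the lower bound $T_1(W_n)\ge av(W_n)=2$ via Theorem \ref{thm1}, matched by the same pot $P=\{t_1,t_2\}$ with $t_1=\{a,\hat{a}^2\}$, $t_2=\{a^{n-1}\}$. The only difference is one of detail: the paper cites \cite{ellis2014minimal} for $B_1(W_n)=1$ and simply asserts the pot achieves the bounds, whereas you verify the assembly explicitly (spokes from the hub tile, then the directed outer cycle) and check the construction matrix, which is a sound but strictly optional elaboration in Scenario 1.
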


\begin{figure}[H]
   \includegraphics[width=9cm]{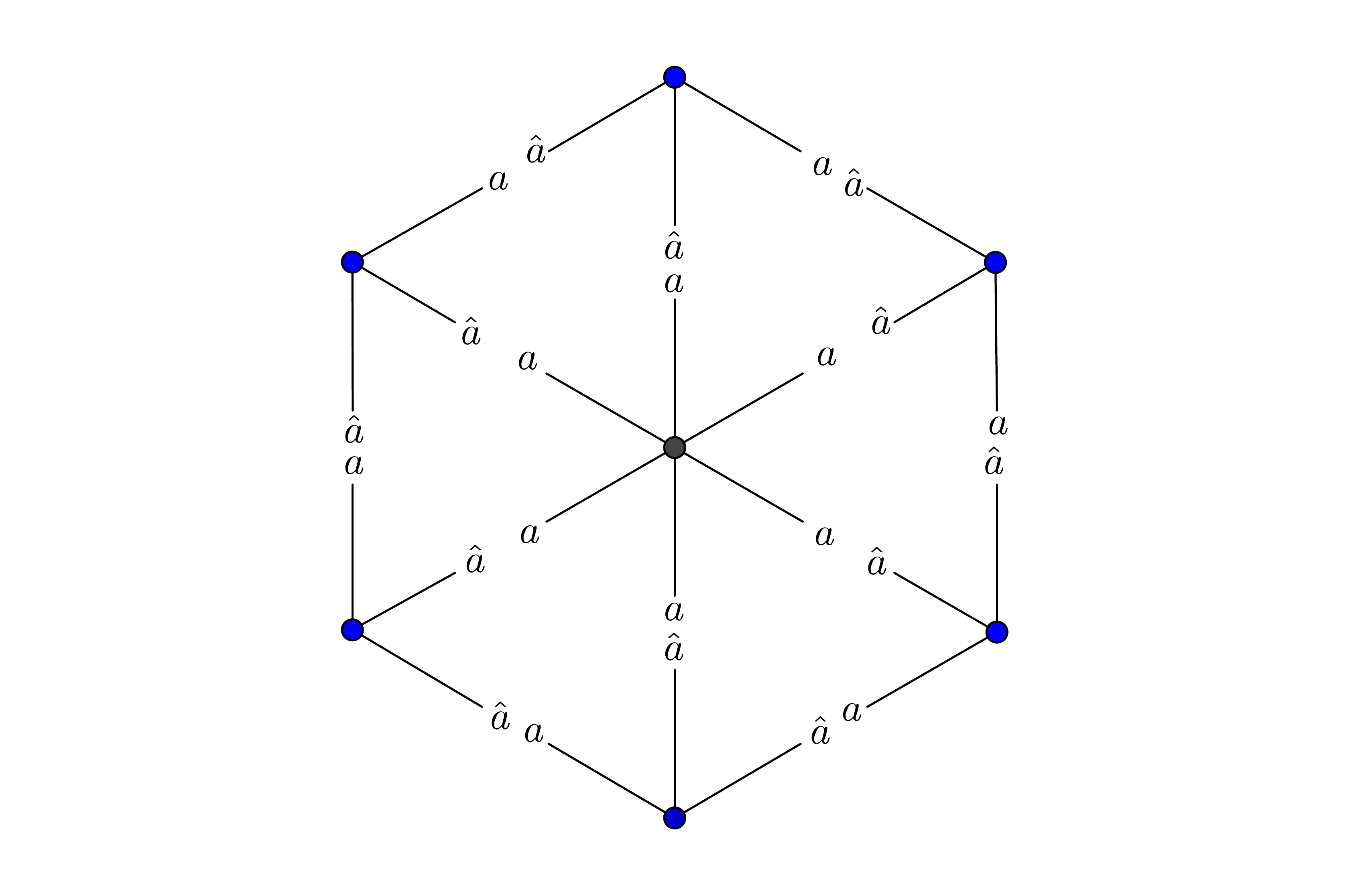}
   \centering
   \caption{$W_7$ constructed using the pot of tiles in Equation (\ref{PotforScen1and2}).}
\end{figure}

 \begin{proof}
 Let $W_n$ denote a wheel graph on $n$ vertices. From \cite{ellis2014minimal}, we know that $B_1(W_n)=1$. We observe that $av(W_n)=2$, so by Theorem \ref{thm1}, we know that $2\leq T_1(W_n)$. This lower bound is achieved by the pot proposed above. We may now conclude that $T_1(W_n)=2$.
 \end{proof}
 
\begin{corr}\label{scen2}
Let $W_n$ be a wheel graph on $n$ vertices. Then $T_2(W_n)=2$ and $B_2(W_n)=1$.
\end{corr}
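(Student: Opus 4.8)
The plan is to reuse the pot $P$ from Equation (\ref{PotforScen1and2}) and verify that it already satisfies the stronger requirements of Scenario~2. Since Proposition~\ref{hierarchy} gives $T_1(W_n) \le T_2(W_n)$ and $B_1(W_n) \le B_2(W_n)$, the values $T_1(W_n)=2$ and $B_1(W_n)=1$ just established immediately yield the lower bounds $T_2(W_n) \ge 2$ and $B_2(W_n) \ge 1$. (The bound $T_2(W_n)\ge av(W_n)=2$ also follows directly from the extended lower bound in Theorem~\ref{thm1}.) Thus it suffices to exhibit a pot using two tile types and one bond-edge type that realizes $W_n$ under the Scenario~2 constraints, and $P$ is the natural candidate since it already realizes $W_n$ by the previous theorem.

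The defining extra condition for Scenario~2 is that $P$ must not realize any graph of order smaller than $n$, while nonisomorphic graphs of order $n$ remain permitted. So the crux is to compute $m_P$ and confirm $m_P=n$. To this end I would write down the construction matrix. With $z_{1,1}=1-2=-1$ (from $t_1=\{a,\hat a^2\}$) and $z_{1,2}=(n-1)-0=n-1$ (from $t_2=\{a^{n-1}\}$), the matrix is
$$\begin{bmatrix} -1 & n-1 & 0 \\ 1 & 1 & 1 \end{bmatrix},$$
whose associated system $-r_1+(n-1)r_2=0$, $r_1+r_2=1$ has the unique solution $r_1=(n-1)/n$, $r_2=1/n$.

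Applying Proposition~\ref{prop3}(3) then finishes the argument: since $\gcd(n-1,n)=1$, both coordinates are already in reduced form with denominator $n$, so the least common multiple of the denominators equals $n$, and hence $m_P=n$. This shows $P$ cannot assemble any graph on fewer than $n$ vertices, so $P$ meets the Scenario~2 constraints while still realizing $W_n$ using $n-1$ copies of $t_1$ and one copy of $t_2$. Combining the upper bounds $T_2(W_n)\le 2$ and $B_2(W_n)\le 1$ with the lower bounds above yields $T_2(W_n)=2$ and $B_2(W_n)=1$.

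The step I expect to be the main obstacle is verifying $m_P=n$ rigorously: one must confirm that no nonnegative solution of the construction matrix produces a smaller least common multiple. Here this is clean because the solution is unique, but in general the control of $m_P$ over all admissible solutions is the delicate part of any Scenario~2 argument and is precisely what separates it from the Scenario~1 result just proved.
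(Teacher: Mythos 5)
Your proposal is correct and follows essentially the same route as the paper's own proof: lower bounds via Theorem~\ref{thm1} and Proposition~\ref{hierarchy}, then the pot from Equation~(\ref{PotforScen1and2}) with its construction matrix and the unique solution $\langle \frac{n-1}{n},\frac{1}{n}\rangle$, concluding via Proposition~\ref{prop3} that no smaller graph can be realized. The only difference is cosmetic: you spell out the reduced-form/$\gcd$ detail in applying Proposition~\ref{prop3}(3), which the paper leaves implicit.
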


\begin{proof}
By Theorem \ref{thm1} and Proposition \ref{hierarchy}, $2\leq T_1(W_n) \leq T_2(W_n)$. This lower bound is achieved by the pot proposed above.  We will now use the construction matrix to verify that no smaller graphs may be constructed from the same pot. The pot of tiles in (\ref{PotforScen1and2}) gives us the following construction matrix:
$$
\begin{bmatrix}
    -1 & n-1 & 0  \\
    1 & 1 & 1  \\
\end{bmatrix},
$$
which yields the unique solution $\langle \frac{n-1}{n},\frac{1}{n} \rangle$, thus by Proposition \ref{prop3} a graph on $n$ vertices is the smallest graph realized by this pot. Therefore, $T_2(W_n)=2$ and $B_2(W_n)=1$.
\end{proof}

It is worth noting that while other graphs on $n$ vertices nonisomorphic to the wheel graph are able to be constructed from this same pot of tiles, the conditions of Scenarios 1 and 2 do not prohibit this.

\begin{figure}[h!]
    \centering
    \includegraphics[width=7cm]{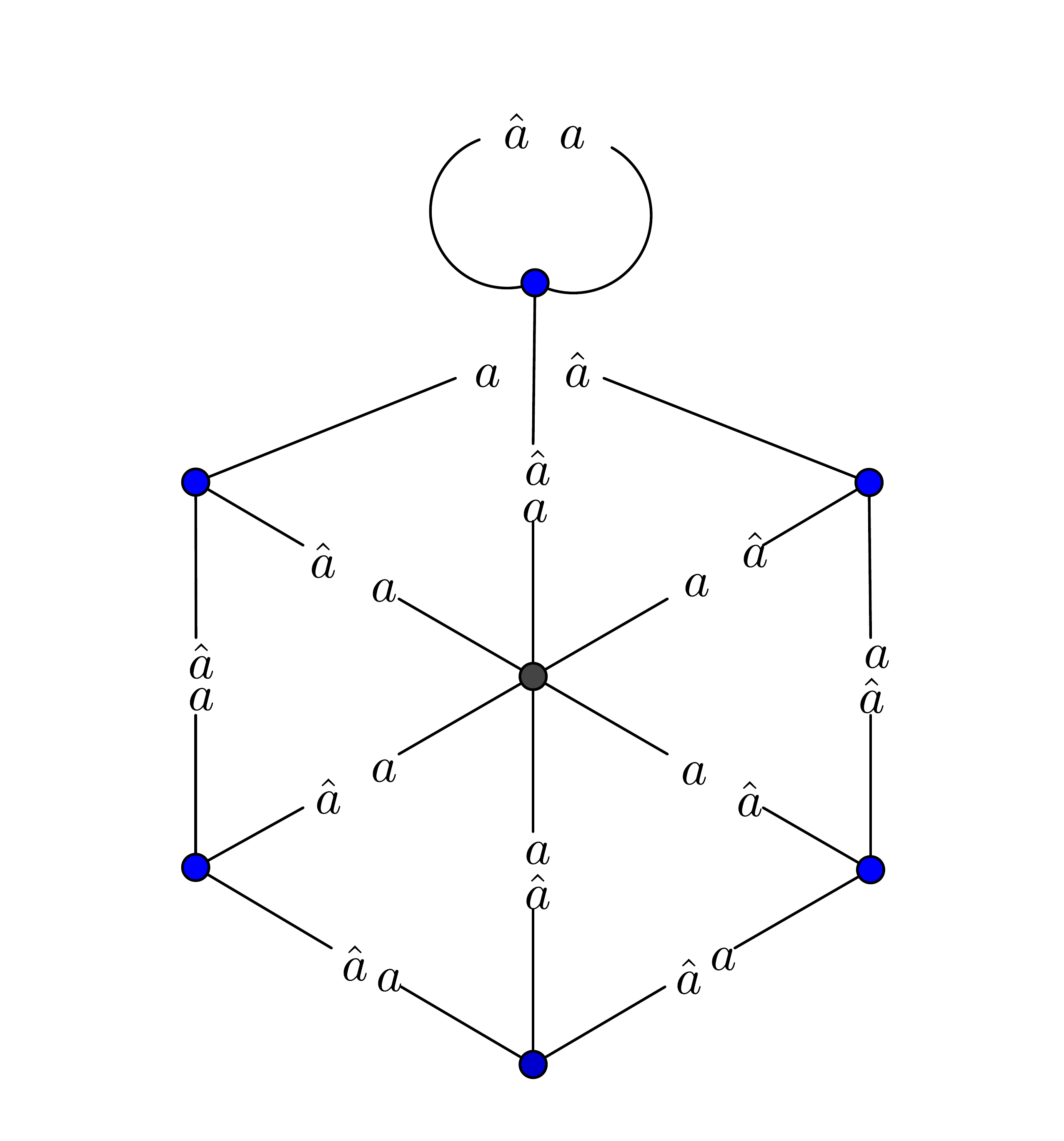}
    \caption{Nonisomorphic graph realized by the pot in Scenario 2}
    \label{nonisograph}
\end{figure}

 \subsection{Wheel Graphs in Scenario 3} \label{scen3results}

Wheel graphs in Scenario 3 require a different construction algorithm  compared to the construction in Scenarios 1 and 2 due to the stricter condition that no nonisomorphic graphs of the same order may be realized by a pot of tiles. To begin, we introduce two lemmas that aid us in optimizing the construction of $W_n$. 
 
 \begin{lemma}\label{lemmaj}
 If $P$ is a pot such that $\{W_n\} = C_{min}(P)$, then no bond-edge type used in the construction of $W_n$ may be used both on the outer cycle and a nonincident spoke. 
 \end{lemma}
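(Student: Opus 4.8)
The plan is to argue by contradiction using a bond-swapping argument. Suppose some bond-edge type $a$ appears on an outer cycle edge $e = v_j v_{j+1}$ and simultaneously on a spoke $s = h v_i$ that is not incident to $e$, so that $i \notin \{j, j+1\}$ and the four vertices $h, v_i, v_j, v_{j+1}$ are distinct. Since both $e$ and $s$ are realized with bond-edge type $a$, each contributes exactly one unhatted end $a$ and one complementary hatted end $\hat{a}$. I would then unbond these two edges and re-glue the cohesive ends in the crossed fashion: pair the unhatted $a$ of $s$ with the hatted $\hat{a}$ of $e$, and the unhatted $a$ of $e$ with the hatted $\hat{a}$ of $s$. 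Because complementary ends are matched again, the result is a new complete complex $G'$ assembled from exactly the same multiset of tiles as $W_n$; hence $G' \in C(P)$ and $|V(G')| = n$.

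First I would record the effect of the swap on the edge set. The swap deletes $e$ and $s$ and introduces two new edges, each joining one endpoint of $s$ to one endpoint of $e$. In particular, the new edge incident to the hub $h$ connects $h$ to whichever of $v_j, v_{j+1}$ carries the end complementary to $h$'s end; in every orientation case this is one of $v_j$ or $v_{j+1}$. The crucial point is that the swap only removes the spoke $s = h v_i$ and leaves the spokes $h v_j$ and $h v_{j+1}$ untouched, so the newly created edge from $h$ duplicates an already-present spoke. Thus $G'$ contains a multi-edge between $h$ and one of $v_j, v_{j+1}$.

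I would then complete the contradiction. Since $W_n$ is a simple graph and $G'$ has a multi-edge, $G' \not\cong W_n$. But $G'$ is constructible from $P$ and has order $n = m_P$, which contradicts the hypothesis $\{W_n\} = C_{min}(P)$ that $W_n$ is the unique graph of minimum order realized by $P$. Hence no bond-edge type can be used on both an outer cycle edge and a nonincident spoke.

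The main obstacle is verifying that the crossed re-gluing is always legitimate and always forces a multi-edge regardless of how the two edges are oriented; this reduces to checking the handful of hatted/unhatted configurations and observing that the hub is adjacent to every outer-cycle vertex, so any fresh edge from $h$ to $v_j$ or $v_{j+1}$ is necessarily a repeat. A secondary point worth stating explicitly is that the flexible tile model permits multigraph realizations, so $G'$ genuinely counts as a graph of order $n$ distinct from $W_n$; alternatively, one may note that a complex carrying a doubled edge yields a realization on fewer than $n$ vertices, again contradicting $m_P = n$.
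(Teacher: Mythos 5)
Your proof is correct and takes essentially the same approach as the paper's: a detach-and-reattach (bond-swap) argument on the outer-cycle edge and the nonincident spoke sharing bond-edge type $a$, which in every orientation case forces a doubled edge between the hub and an endpoint of the cycle edge, yielding a complex of order $n$ nonisomorphic to $W_n$ and contradicting $\{W_n\} = C_{min}(P)$. (Your closing alternative claim---that a doubled edge yields a realization on fewer than $n$ vertices---is unjustified, but it is not needed; the main argument stands without it.)
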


 \begin{proof}
 We proceed by contradiction. Suppose there exists a pot $P$ such that $\{W_n\} = C_{min}(P)$ and some bond-edge type, say $a$, appears on an edge on the outer cycle as well as a nonincident spoke edge. See the graphs on the left side of Figure \ref{lemmajcontra}. 
 
 \begin{figure}[h]
     \centering
     \includegraphics[width=10cm]{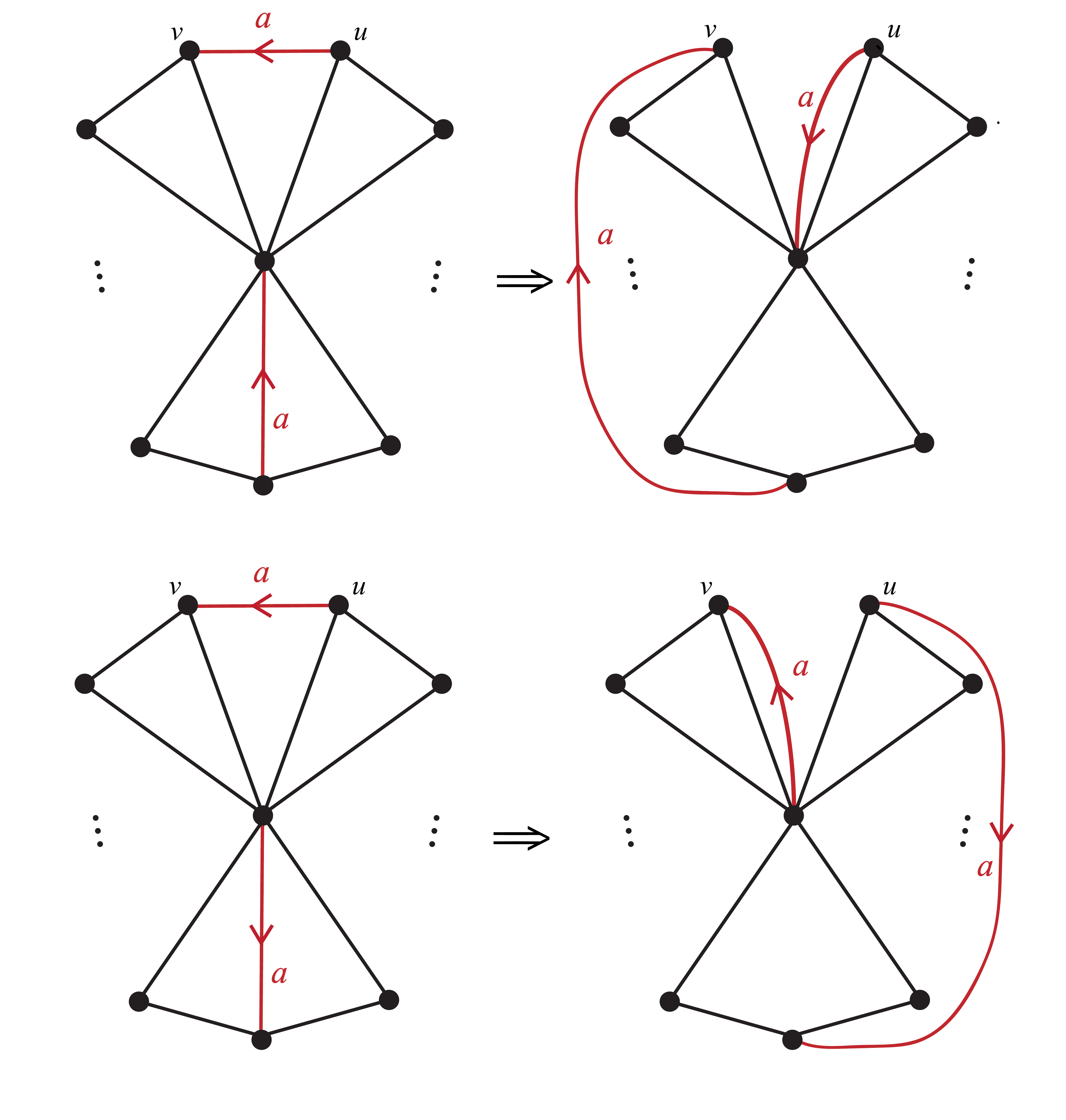}
     \caption{Graph with bond-edge type on the outer cycle and a nonincident spoke edge}
     \label{lemmajcontra}
 \end{figure}
 
 Without loss of generality, we may assume the edge on the outer cycle is oriented counterclockwise,  call this edge $(u,v)$. The spoke edge with the same bond-edge type may be oriented either towards the hub or towards the outer  cycle, but in either case, it is possible that the half edges on the spoke may reattach with the half edges on the outer cycle. This will produce a multiple edge with the hub and  vertex $u$ or $v$, as seen on the right side in Figure \ref{lemmajcontra}. The resulting graph is nonisomorphic to $W_n$, thus contradicting the assumption that $\{W_n\} = C_{min}(P)$.
 \end{proof}
 
 
   \begin{lemma}\label{cyclelemma}
If $P$ is a pot such that $\{W_n\} = C_{min}(P)$, then a bond-edge type can be used  at most twice on the outer cycle in the construction of $W_n$.
 \end{lemma}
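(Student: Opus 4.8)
The plan is to argue by contradiction, mirroring the reattachment technique of Lemma \ref{lemmaj} but adapting it to exploit the presence of the hub. Suppose $P$ is a pot with $\{W_n\} = C_{min}(P)$, yet some bond-edge type $a$ appears on at least three edges of the outer cycle. Each such edge carries an unhatted $a$ half-edge on one endpoint and a hatted $\hat{a}$ half-edge on the other, and by convention is directed from the former to the latter. First I would fix a traversal direction of the outer cycle and classify each of the three $a$-edges as \emph{forward} or \emph{backward} according to whether its orientation agrees with the traversal. Since there are only two classes, the pigeonhole principle guarantees that two of the three edges, say $(u_1,w_1)$ and $(u_2,w_2)$ with $a$-ends at $u_1$ and $u_2$, share the same orientation.

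Next I would rewire only within the pool of $a$ and $\hat{a}$ ends: reattach the $a$ half-edge at $u_1$ to the $\hat{a}$ half-edge at $w_2$, and the $a$ half-edge at $u_2$ to the $\hat{a}$ half-edge at $w_1$, leaving every other bond in the assembly untouched, including all spokes and any incident-spoke occurrences of $a$ that Lemma \ref{lemmaj} still permits. Because the two edges share an orientation, removing them splits the outer cycle into two arcs whose free ends are exactly the four half-edges above, and the new pairings close each arc into a cycle; hence the single $(n-1)$-cycle is severed into two vertex-disjoint cycles on the outer vertices, while the spokes continue to join every outer vertex to the hub. The resulting complex reuses the same multiset of $n$ tiles, so it has order $n$ and is connected through the hub.

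I would then verify that this complex is nonisomorphic to $W_n$: deleting the hub from $W_n$ leaves the connected cycle $C_{n-1}$, whereas deleting the hub from the reattached graph leaves two disjoint cycles. Since $n>4$ forces $n-1\ge 4$, neither resulting piece can be the whole outer cycle, so the two graphs cannot be isomorphic. This yields an order-$n$ graph realizable by $P$ yet distinct from $W_n$, contradicting $\{W_n\} = C_{min}(P)$ in Scenario 3, and completes the proof. The same construction also clarifies why two occurrences remain permissible: a single pair of \emph{oppositely} oriented $a$-edges can only be reattached so as to reverse an arc, returning an isomorphic copy of $W_n$ and producing no contradiction.

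The main obstacle I anticipate is the case analysis around degenerate splits: when the two chosen edges are adjacent, an arc between them is empty and the reattachment produces a loop or a multiple edge rather than two proper cycles. I would need to confirm that in each such case the output is still a legitimate object realizable by $P$ and still nonisomorphic to the simple graph $W_n$; observing that any loop or multi-edge already distinguishes it from $W_n$ should dispatch these cases cleanly. A secondary point to nail down is the orientation bookkeeping, to ensure the two reattached half-edges are genuinely an $a$ paired with an $\hat{a}$ so that the new half-edge matching is complete and valid.
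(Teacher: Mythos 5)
Your proof is correct and its core machinery is the same as the paper's: argue by contradiction, use pigeonhole to extract two like-oriented $a$-edges from the three, and swap their half-edge pairings so that the outer cycle severs into two vertex-disjoint cycles while all spokes stay intact. Where you genuinely diverge is the nonisomorphism certificate. The paper finishes with Hamiltonicity: $W_n$ has a Hamilton cycle, but in the rewired complex any closed walk visiting all vertices must pass through the hub twice, so no Hamilton cycle exists. You instead delete the hub and compare connectivity: $W_n$ minus its hub is the connected cycle $C_{n-1}$, while the rewired graph minus its hub is two disjoint cycles. Both certificates rest on the same structural fact (the hub has become a cut vertex), and yours is arguably more elementary, but as written it has one small hole: comparing the hub-deleted graphs only forces a contradiction if every isomorphism between the two graphs carries hub to hub. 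This is easy to patch --- since the spokes are untouched, the hub is the unique vertex of degree $n-1 > 3$ in both graphs when $n > 4$, so any isomorphism restricts to an isomorphism of the hub-deleted subgraphs, one connected and one not; alternatively, just observe that the rewired graph has a cut vertex while $W_n$ has none. Two features of your write-up go beyond the paper's proof and are worth keeping: the explicit handling of the degenerate case where the two chosen edges are adjacent (yielding a loop, which is still nonisomorphic to the simple graph $W_n$), which the paper passes over silently, and the closing observation that a pair of \emph{oppositely} oriented edges only permits an arc reversal --- this correctly explains why the bound of two occurrences is not improvable and why the pots in Equations (\ref{Scen3even}) and (\ref{Scen3odd}) are consistent with the lemma.
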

 

 \begin{proof}
By way of contradiction, suppose $\{W_n\} = C_{min}(P)$ and suppose in the construction of $W_n$ more than two edges along the outer cycle are labeled with the same bond-edge type. By the Pigeonhole Principle, at least two of these edges must be directed in the same orientation. Without loss of generality, suppose these two edges are oriented clockwise. See Figure \ref{cyclelemmaproof} for reference.  

\begin{figure}[h]
    \centering
    \includegraphics[width=7cm]{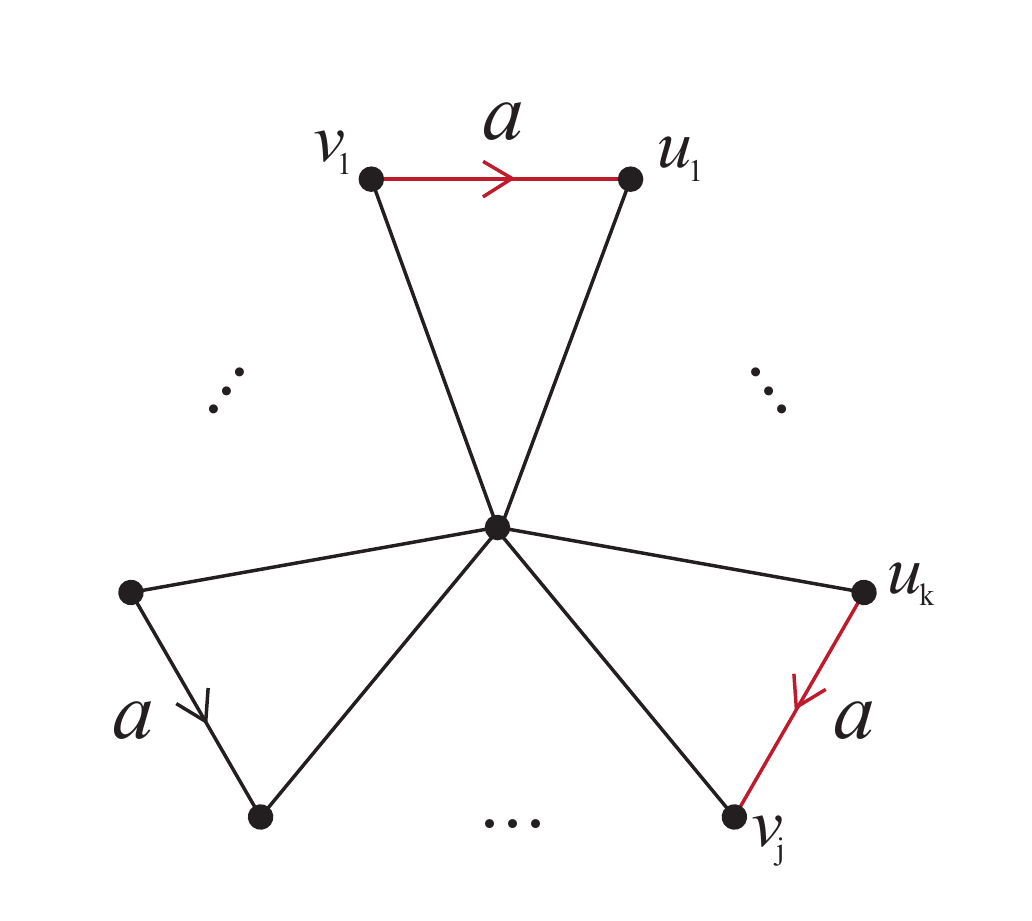}
    \caption{Graph with bond-edge type appearing at least three times on the outer cycle}
    \label{cyclelemmaproof}
\end{figure}

This arrangement allows for the edges to detach and rejoin as in Figure \ref{cyclecontra}  to form the complex $H$. We will now show that this will necessarily yield a nonisomorphic graph. 
 
   \begin{figure}[h]
     \includegraphics[ width=8cm]{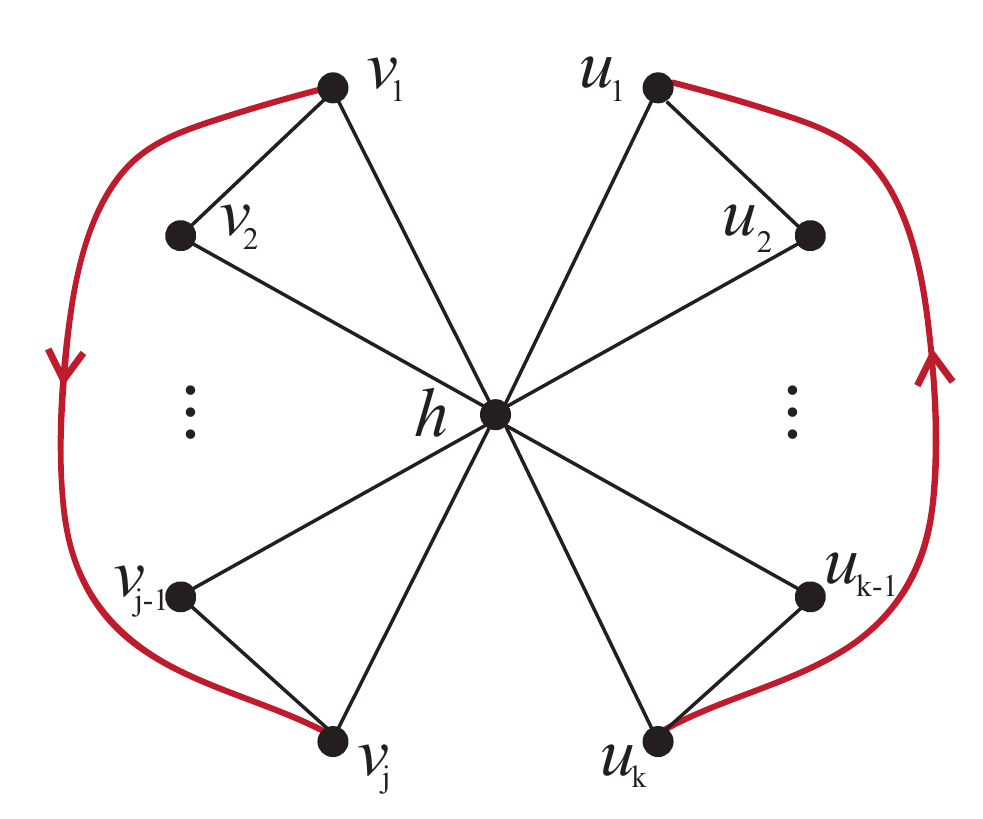}
     \centering
     \caption{Resulting graph $H$}
     \label{cyclecontra}
 \end{figure}
 
 Recall, $W_n$ has a Hamilton cycle, as seen in Figure \ref{hamiltonpic}. If graph $H$ is isomorphic to the original wheel graph, then it must have a Hamilton cycle.  
 
 However, refer to  Figure \ref{cyclecontra} to see that the set of vertices excluding the hub, $h$, may be partitioned into two sets so that no two vertices in different sets are adjacent. Suppose without loss of generality that we start a walk at vertex $h$. 
 From there, a walk of distinct vertices will include each vertex, $u_i$ for $1 \leq i \leq k$. The only way to include vertex $v_1$ in a cycle is for the walk to pass through vertex $h$. Thus, no Hamilton cycle exists in the graph $H$. Therefore, the pot $P$ realizes a graph nonisomorphic to $W_n$, contradicting the assumption that $\{W_n\} = C_{min}(P)$. 
 \end{proof}

 With these new lemmas, we have developed an algorithm to construct a wheel graph that builds upon the  construction of the cycle graph in Scenario 3 (see Proposition \ref{cycle3}). In particular, to construct the wheel graph $W_n$, we begin with the construction of the cycle graph $C_{n-1}$, pictured in Figure \ref{scen3cycle}.
     \begin{figure}[H]
    \includegraphics[trim=0 1cm 0 1cm, clip, width=6cm]{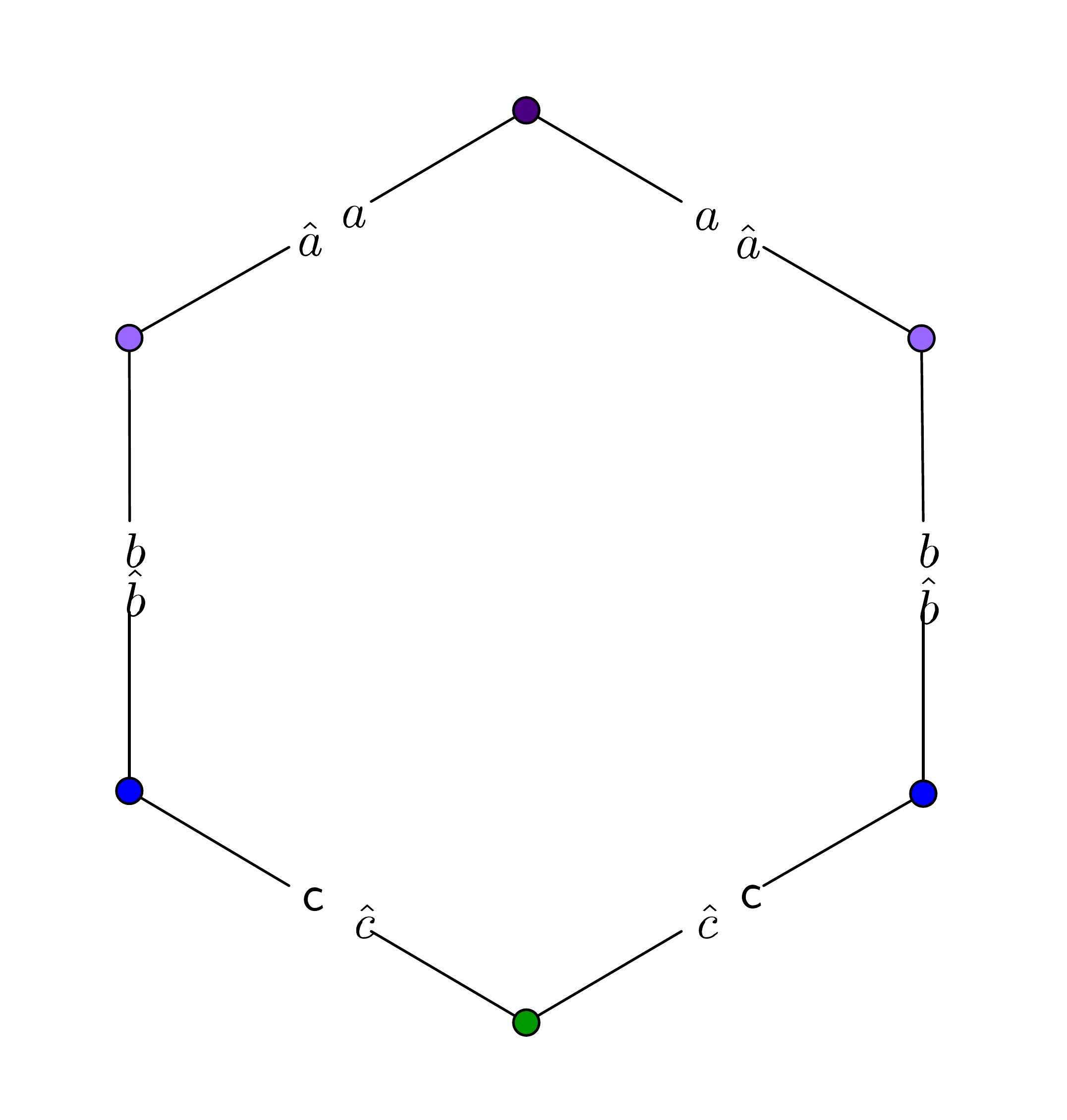} 
    \centering
        \caption{The optimized cycle graph $C_6$.}\label{scen3cycle}
    \end{figure}

From the optimized construction of the cycle graph, we can append the hub vertex and all necessary edges to the vertices on the outer cycle with one new bond-edge type.
  \begin{figure}[h!]
   \includegraphics[trim=0 0 0 1cm, clip, width=8.5cm]{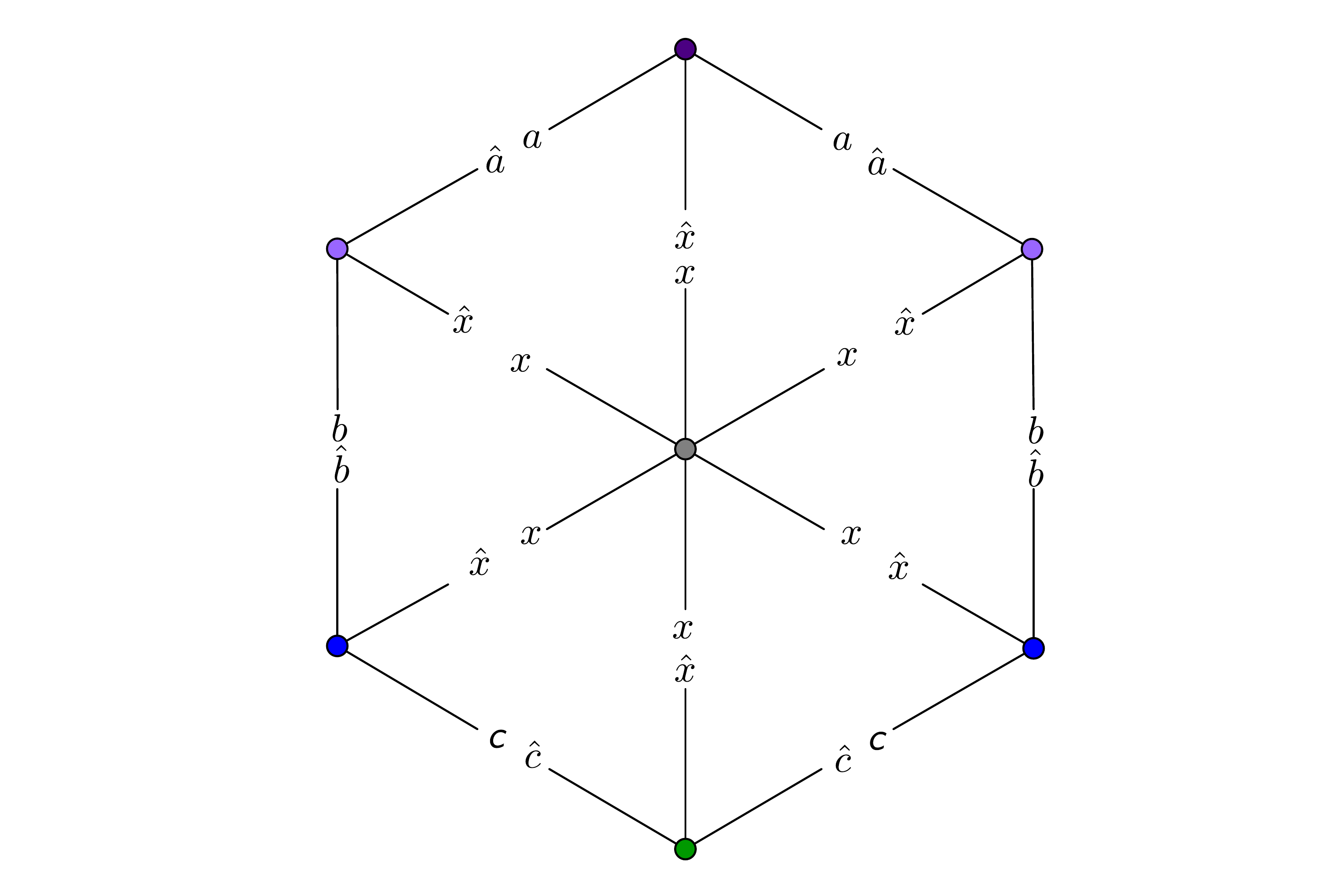} \
   \centering
   \caption{The proposed optimization of the wheel graph.}\label{scen3wheel}
    \end{figure}

 To prove that the construction described above uses the minimum number of tile types and bond-edge types needed to construct the wheel graph on $n$ vertices, it suffices to show that the proposed labeling constructs the wheel graph as efficiently as possible, i.e., that it is not possible to construct $W_n$ with fewer bond-edge types or tile types in Scenario 3. 
 \begin{theorem}\label{scen3}
$B_3(W_n)=\floor*{\frac{n}{2}}+1$. 
\end{theorem}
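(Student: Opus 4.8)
The plan is to establish the equality by proving the two inequalities $B_3(W_n)\le \floor*{\frac{n}{2}}+1$ and $B_3(W_n)\ge \floor*{\frac{n}{2}}+1$ separately. For the upper bound I would exhibit the construction depicted in Figure \ref{scen3wheel}: take the Scenario~3 optimal labeling of the outer cycle $C_{n-1}$, which by Proposition \ref{cycle3} uses $\ceil*{\frac{n-1}{2}}$ bond-edge types, and then label every spoke with a single new bond-edge type, so that all spoke half-edges incident to the hub receive the same (say, hatted) label. Using the identity $\ceil*{\frac{n-1}{2}}=\floor*{\frac{n}{2}}$, this uses exactly $\floor*{\frac{n}{2}}+1$ bond-edge types. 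I would then verify that this pot realizes $W_n$ in Scenario~3, i.e.\ that $C_{min}(P)=\{W_n\}$: since every spoke half-edge at the hub carries the same label, any recombination of spoke ends keeps each spoke attached to the hub, so the star of spokes is preserved, while Lemmas \ref{lemmaj} and \ref{cyclelemma} prevent any recombination along the cycle from producing a smaller or nonisomorphic complex.

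For the lower bound I would argue that any pot $P$ with $C_{min}(P)=\{W_n\}$ must use at least $\floor*{\frac{n}{2}}+1$ bond-edge types. First, restricting attention to the $n-1$ edges of the outer cycle, Lemma \ref{cyclelemma} shows that each bond-edge type is used at most twice there; since there are $n-1$ cycle edges, at least $\ceil*{\frac{n-1}{2}}=\floor*{\frac{n}{2}}$ distinct bond-edge types must appear on the outer cycle. It then remains to produce one additional bond-edge type forced by the spokes, which will complete the bound.

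To obtain that extra type, the plan is to assume for contradiction that every bond-edge type occurring on a spoke already occurs on the outer cycle, and to derive a recombination that violates Scenario~3. By Lemma \ref{lemmaj}, a spoke may only share its label with a cycle edge incident to it, so the spoke at a vertex $v_i$ must carry the label of one of the two cycle edges meeting $v_i$. I would then track the hatted and unhatted half-edges of such a shared type and show that a compatible reattachment is always available that either creates a loop or a multiple edge at $v_i$, or severs a spoke from the hub, in each case yielding a complex of smaller order or nonisomorphic to $W_n$ (for instance by destroying the Hamilton cycle, as in the proof of Lemma \ref{cyclelemma}). This contradiction forces at least one spoke to use a bond-edge type absent from the outer cycle, giving the desired total of $\floor*{\frac{n}{2}}+1$. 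I expect this last step to be the main obstacle: one must show the local reattachment is globally realizable and genuinely order-reducing or orientation-breaking for every placement and orientation of the shared label, and extra care is needed because Lemma \ref{cyclelemma} permits a single type to appear twice on the cycle, which multiplies the reattachment cases that must be ruled out.
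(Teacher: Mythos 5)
There is a genuine gap, and it sits in the direction you treated as routine: the upper bound. Lemmas \ref{lemmaj} and \ref{cyclelemma} are \emph{necessary} conditions --- each has the form ``if $\{W_n\}=C_{min}(P)$, then such-and-such a labeling cannot occur.'' You invoke them in the converse direction, to certify that your pot satisfies $C_{min}(P)=\{W_n\}$. A pot whose labeling merely avoids the forbidden patterns is not thereby guaranteed to realize only $W_n$, and nothing in your sketch excludes graphs of order \emph{smaller} than $n$, which the lemmas say nothing about (smaller complexes arise from using the tiles in different multiplicities, not from ``recombining'' the drawn picture of $W_n$). Your pot itself is fine --- it coincides with the paper's $P_{even}$ and $P_{odd}$ in Equations (\ref{Scen3even}) and (\ref{Scen3odd}) --- but the verification must be done as the paper does it: (i) compute the construction matrix of the pot and check that its solution is unique, so that Proposition \ref{prop3} rules out every graph of order less than $n$; and (ii) give a direct bonding argument (the hub tile $t_1$ must bond to every other tile; the two free ends of $t_2$ can only be completed by copies of $t_3$; in general $t_i$ can bond only with $t_1$, $t_{i-1}$, $t_{i+1}$) showing that the only order-$n$ complex the pot assembles is $W_n$. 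Your proposal contains no substitute for either step.

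Your lower bound follows the paper's skeleton (Lemma \ref{cyclelemma} forces at least $\floor{\frac{n}{2}}$ types on the outer cycle; Lemma \ref{lemmaj} is then used to extract one more type from the spokes), but the step you flag as the main obstacle is where it genuinely breaks: the claim that ``a compatible reattachment is always available'' is false. If the spoke at $u$ and the incident cycle edge $(u,v)$ share type $a$ with both \emph{unhatted} half-edges at $u$, then the four half-edges involved can only re-pair into ``$u$ adjacent to $h$'' and ``$u$ adjacent to $v$,'' reproducing $W_n$; no loop, multiple edge, or severed spoke can be forced locally. The repair is not a cleverer local swap but a counting dichotomy, which is the role of the paper's sentence about the $n-1$ spoke edges: show that a single bond-edge type cannot label more than one spoke (a type shared by one cycle edge and the spokes at \emph{both} of its endpoints does admit a bad reattachment --- orientation analysis puts an $a$ and an $\hat{a}$ on the hub, yielding a loop at $h$ plus a doubled edge $uv$ --- and a type on a nonincident cycle edge violates Lemma \ref{lemmaj} directly). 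Hence if no spoke carried a new type, the $n-1$ spokes would require at least $n-1\geq\floor{\frac{n}{2}}+1$ distinct types, and the bound $B_3(W_n)\geq\floor{\frac{n}{2}}+1$ holds in either branch.
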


\begin{proof}
In the wheel graph, $W_n$, there are $n-1$ edges on the outer cycle. By Lemma \ref{cyclelemma}, no bond-edge type on the outer cycle may be used more than twice. Thus, $B_3(W_n) \geq \ceil*{\frac{n-1}{2}} = \floor*{\frac{n}{2}}.$  

If any bond-edge type from the outer cycle is used on more than one spoke edge, then that bond-edge type will appear on both the outer cycle and some nonincident spoke. By Lemma \ref{lemmaj}, this is not permitted. Since there are $n-1$ spoke edges, then it is necessary to introduce at least one new bond-edge type. Thus, $B_3(W_n) \geq \floor{\frac{n}{2}}+1.$  

The following pots of tiles realize $W_n$ using exactly $\floor{\frac{n}{2}}+1$ bond-edge types. Note that $P_{even}$ and $P_{odd}$ are the pots when $n$ is even or odd, respectively.

\begin{align}\label{Scen3even}
P_{even}= \Big\{ &t_1=\{a_1^{n-1}\}, t_2=\{\hat{a}_1, a_2^2 \}, t_i=\{\hat{a}_1, \hat{a}_{i-1}, a_i \},  \text{for } i = 3, \ldots, \floor*{\frac{n}{2}}+1, \\
& t_{\floor{\frac{n}{2}}+2}=\{\hat{a}_1, \hat{a}_{\floor{\frac{n}{2}}}, \hat{a}_{\floor{\frac{n}{2}}+1}\} \Big\} \nonumber
\end{align}

\begin{align}\label{Scen3odd}
   P_{odd}= \Big\{&t_1=\{a_1^{n-1}\}, t_2=\{\hat{a}_1, a_2^2 \}, t_i=\{\hat{a}_1, \hat{a}_{i-1}, a_i \},  \text{for } i = 3, \ldots, \floor*{\frac{n}{2}}+1, \\
   &t_{\floor{\frac{n}{2}}+2}=\{\hat{a}_1, \hat{a}_{\floor{\frac{n}{2}}+1}^2\} \Big\} \nonumber
\end{align}

The pot of tiles where $n$ is even produces the following construction matrix:
\begin{equation} \label{evenmatrix}
M(P_{even})=
\begin{bmatrix}
 (n-1)  & -1 & -1 & -1 & -1 & \cdots & -1 & -1 & 0\\ 
 0& 2 & -1 & 0 & 0 & \cdots & 0 & 0 & 0\\ 
 0& 0 & 1 & -1 & 0 & \cdots & 0 & 0 & 0\\ 
 0& 0 & 0 & 1 & -1 & \cdots & 0 & 0 & 0\\ 
 0& 0 & 0 & 0 & 1 & \cdots & 0 & 0 & 0\\ 
 \vdots& \vdots & \vdots & \vdots & \vdots & \ddots & \vdots & \vdots & \vdots\\ 
 0& 0 & 0 & 0 & 0 & \cdots & -1 & -1 & 0\\ 
 0& 0 & 0 & 0 & 0 & \cdots & 1 & -1 & 0\\ 
1 & 1 & 1 & 1 & 1 & 1 & 1 & 1 & 1
\end{bmatrix}    
\end{equation}

This construction matrix has the unique solution $\langle \frac{1}{n}, \frac{1}{n}, \frac{2}{n},  \dots, \frac{2}{n}, \frac{1}{n}, \frac{1}{n}\rangle $.

The pot of tiles where $n$ is even produces the following construction matrix:
\begin{equation} \label{oddmatrix}
  M(P_{odd}) = \begin{bmatrix} 
 (n-1)  & -1 & -1 & -1 & -1 & \cdots & -1 & -1 & 0\\ 
 0& 2 & -1 & 0 & 0 & \cdots & 0 & 0 & 0\\ 
 0& 0 & 1 & -1 & 0 & \cdots & 0 & 0 & 0\\ 
 0& 0 & 0 & 1 & -1 & \cdots & 0 & 0 & 0\\ 
 0& 0 & 0 & 0 & 1 & \cdots & 0 & 0 & 0\\ 
 \vdots& \vdots & \vdots & \vdots & \vdots & \ddots & \vdots & \vdots & \vdots\\ 
 0& 0 & 0 & 0 & 0 & \cdots & -1 & 0 & 0\\ 
 0& 0 & 0 & 0 & 0 & \cdots & 1 & -2 & 0\\ 
1 & 1 & 1 & 1 & 1 & 1 & 1 & 1 & 1
\end{bmatrix}
\end{equation}
This construction matrix has the unique solution $\langle \frac{1}{n}, \frac{1}{n}, \frac{2}{n},  \dots, \frac{2}{n}, \frac{1}{n}\rangle $. By Proposition \ref{prop3}, the smallest graph that can be realized by this pot is on $n$ vertices.

We must now show that the proposed pots will not realize graphs nonisomorphic to $W_n$. In both proposed pots, the solutions from (\ref{evenmatrix}) and (\ref{oddmatrix}) give the ratio of tile types to be used when constructing a graph of order $n$. In both pots, only one copy of tile $t_1$ is used, and since every other tile has one bond-edge type of type $\hat{a_1}$, and $t_1$ is only tile containing a bond-edge of type $a_1$, then $t_1$ must bond with every other tile from $P$ in order to form a complete complex.  In both even and odd cases, tile type $t_2$ is used exactly once. The only tile type that is able to bond with the unattached edges of $t_2$ is $t_3$, so the two copies of $t_3$ must bond with $t_2$. Once the two copies of $t_3$ bond to $t_2$ and to $t_1$, then each of these two copies has a free cohesive end that can only be complemented by a cohesive end from $t_4$, so each copy of $t_4$ must bond with one of the copies of $t_3$. In general, for $3 \leq i \leq \floor{\frac{n}{2}}+1,$ tile $t_i$ can only bond with tiles $t_1$, $t_{i-1},$ and $t_{i+1}$. For cases in which $n$ is even, $t_{\floor{\frac{n}{2}}+2}$ must bond with tiles $t_{\floor{\frac{n}{2}}}$ and $t_{\floor{\frac{n}{2}}+1}$. For cases in which $n$ is odd, $t_{\floor{\frac{n}{2}}+2}$ must bond with two copies of tile $t_{\floor{\frac{n}{2}}+1}$. The resulting complex is isomorphic to $W_n$.

Thus, $B_3(W_n) = \floor*{\frac{n}{2}}+1$.
\end{proof}

The corresponding result for minimum number of tile types needed to construct $W_n$ follows from Theorem \ref{scen3}.

\begin{theorem}\label{scen3tiles}
$T_3(W_n)=\floor{\frac{n}{2}}+2$.
\end{theorem}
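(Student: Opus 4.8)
The plan is to derive $T_3(W_n)$ from the bond-edge result in Theorem \ref{scen3} together with the structural constraints already established. First I would establish the lower bound. Since $W_n$ has exactly two distinct vertex degrees (the hub has degree $n-1$, every outer-cycle vertex has degree $3$), Theorem \ref{thm1} gives $av(W_n) = 2 \leq T_3(W_n)$, but this is far too weak. Instead, I would count tile types by counting the distinct \emph{configurations} forced on the vertices. The key observation is that a tile type is determined by the multiset of bond-edge types on its half-edges, and Lemma \ref{lemma3} forbids any tile type from being reused on two adjacent vertices. I expect to argue that each bond-edge type on the outer cycle, being usable at most twice by Lemma \ref{cyclelemma} and barred from nonincident spokes by Lemma \ref{lemmaj}, forces essentially one new tile type per bond-edge type along the cycle, giving a lower bound tied to $\lfloor \frac{n}{2}\rfloor$. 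Combined with the hub tile (which is distinct, having degree $n-1$), I would aim to show $T_3(W_n) \geq \lfloor \frac{n}{2}\rfloor + 2$.

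The upper bound is the easy direction: the pots $P_{even}$ and $P_{odd}$ exhibited in the proof of Theorem \ref{scen3} consist of exactly $\lfloor \frac{n}{2}\rfloor + 2$ tile types (namely $t_1$ through $t_{\lfloor n/2\rfloor + 2}$), and that proof already verifies these pots realize $W_n$ with $\{W_n\} = C_{min}(P)$ in Scenario 3. So the construction certifying $B_3(W_n) = \lfloor \frac{n}{2}\rfloor + 1$ simultaneously certifies $T_3(W_n) \leq \lfloor \frac{n}{2}\rfloor + 2$, and no separate construction is needed.

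The main obstacle will be making the lower-bound counting argument airtight. The delicate point is translating the bond-edge restrictions into a tile-type count, because a single bond-edge type can appear on several tiles and a single tile carries several bond-edge types, so the correspondence is not one-to-one. I would handle this by relating the number of tile types to the number of bond-edge types directly: since $B_3(W_n) = \lfloor \frac{n}{2}\rfloor + 1$ and, by the structure forced in Scenario 3, the hub requires a dedicated tile type carrying all $n-1$ spoke half-edges of one orientation while the outer-cycle vertices must supply the complementary half-edges distributed across distinct tiles, I expect the tile count to exceed the bond-edge count by exactly one. The cleanest route is likely to show that any pot realizing $W_n$ in Scenario 3 with the minimum $B_3$ must use one hub tile plus at least $\lfloor \frac{n}{2}\rfloor + 1$ distinct outer-vertex tile types, forced by Lemma \ref{lemma3} acting along the outer cycle in tandem with the at-most-twice constraint of Lemma \ref{cyclelemma}; pinning down precisely why an additional tile beyond the $\lfloor \frac{n}{2}\rfloor+1$ bond-edge types is unavoidable is where the argument needs the most care.
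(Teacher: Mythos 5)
Your upper bound is fine and coincides with the paper's: the pots $P_{even}$ and $P_{odd}$ from Theorem \ref{scen3} use exactly $\lfloor n/2\rfloor+2$ tile types, and that theorem's proof already verifies $\{W_n\}=C_{min}(P)$. The lower bound, however, has two genuine gaps. First, your ``cleanest route'' quantifies over the wrong set of pots: you propose to show that any pot realizing $W_n$ \emph{with the minimum number of bond-edge types} needs $\lfloor n/2\rfloor+2$ tile types. But $T_3(W_n)$ is a minimum over \emph{all} pots $P$ with $\{W_n\}=C_{min}(P)$, including pots spending more than $\lfloor n/2\rfloor+1$ bond-edge types; a priori such a pot could use fewer tile types, so nothing about $T_3$ follows from an analysis restricted to bond-edge-optimal pots. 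Second, and more seriously, your counting mechanism (``one new tile type per bond-edge type along the cycle'') would at best yield $T_3(W_n)\geq \lfloor n/2\rfloor+1$ after adding the hub tile, and you explicitly defer the question of why one \emph{additional} tile type is unavoidable as ``where the argument needs the most care.'' That deferred step is precisely the mathematical heart of the theorem, and no argument for it is offered; moreover, the bond-edge-to-tile correspondence you lean on is genuinely not one-to-one (in the paper's own pots, the tile $t_i=\{\hat a_1,\hat a_{i-1},a_i\}$ carries three bond-edge types), so the heuristic cannot be repaired by bookkeeping alone.

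For comparison, the paper's proof never routes through bond-edge counts; it works directly with tile-type multiplicities on the outer cycle. (i) No tile type can occur three or more times on the cycle: by Lemma \ref{lemma3} such copies are pairwise nonadjacent, by Lemma \ref{lemmaj} all of their spoke half-edges must carry one common label, and then one of the two remaining labels of that tile type appears on at least three cycle edges, violating Lemma \ref{cyclelemma}; hence at least $\lceil (n-1)/2\rceil=\lfloor n/2\rfloor$ tile types are needed on the cycle. (ii) Exactly $\lfloor n/2\rfloor$ is then ruled out: with that count the multiplicities are forced (every type used twice when $n$ is odd; all but one used twice when $n$ is even), so one takes a duplicated type $t_1=\{a,b,c\}$, uses Lemma \ref{lemmaj} to force both copies to place the same label on their spokes, uses Lemma \ref{cyclelemma} and its proof (for orientations) to force the neighbors of the two copies along the $b$-edges to be copies of a single tile type $t_2$, and propagates this pairing along the two arcs of the cycle between the copies of $t_1$. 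A parity argument on the arc lengths then produces either a tile type adjacent to itself (contradicting Lemma \ref{lemma3}) or a tile type used only once when the multiplicity count requires twice. This propagation-plus-parity argument is the missing idea in your proposal; without it, or a substitute for it, the bound $T_3(W_n)\geq\lfloor n/2\rfloor+2$ is not established.
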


\begin{proof}
Since the only vertex of degree $n-1$ is the hub, this vertex must have its own tile type. 

The pots in (\ref{Scen3even}) and (\ref{Scen3odd}) suggest that the remaining vertices, all of which are of degree 3, can be labeled with $\floor{\frac{n}{2}} + 1$ tile types. 

If it were possible to construct the outer cycle of $W_n$ using exactly $\floor{\frac{n}{2}}$ tile types, then every tile on the outer cycle would appear twice if $n$ is odd, and if $n$ is even, exactly one tile type on the outer cycle would appear once.
Suppose for sake of contradiction that $\floor{\frac{n}{2}}$ tile types may be used on the outer cycle. Label some pair of tiles on the outer cycle $t_1$, where $t_1=\{a,b,c\}$ (where $a, b, c$ are not necessarily distinct). By Lemma \ref{lemma3}, these two copies must have a distance between them of at least two.

\begin{figure}[H]
	\centering
\includegraphics[scale=.4]{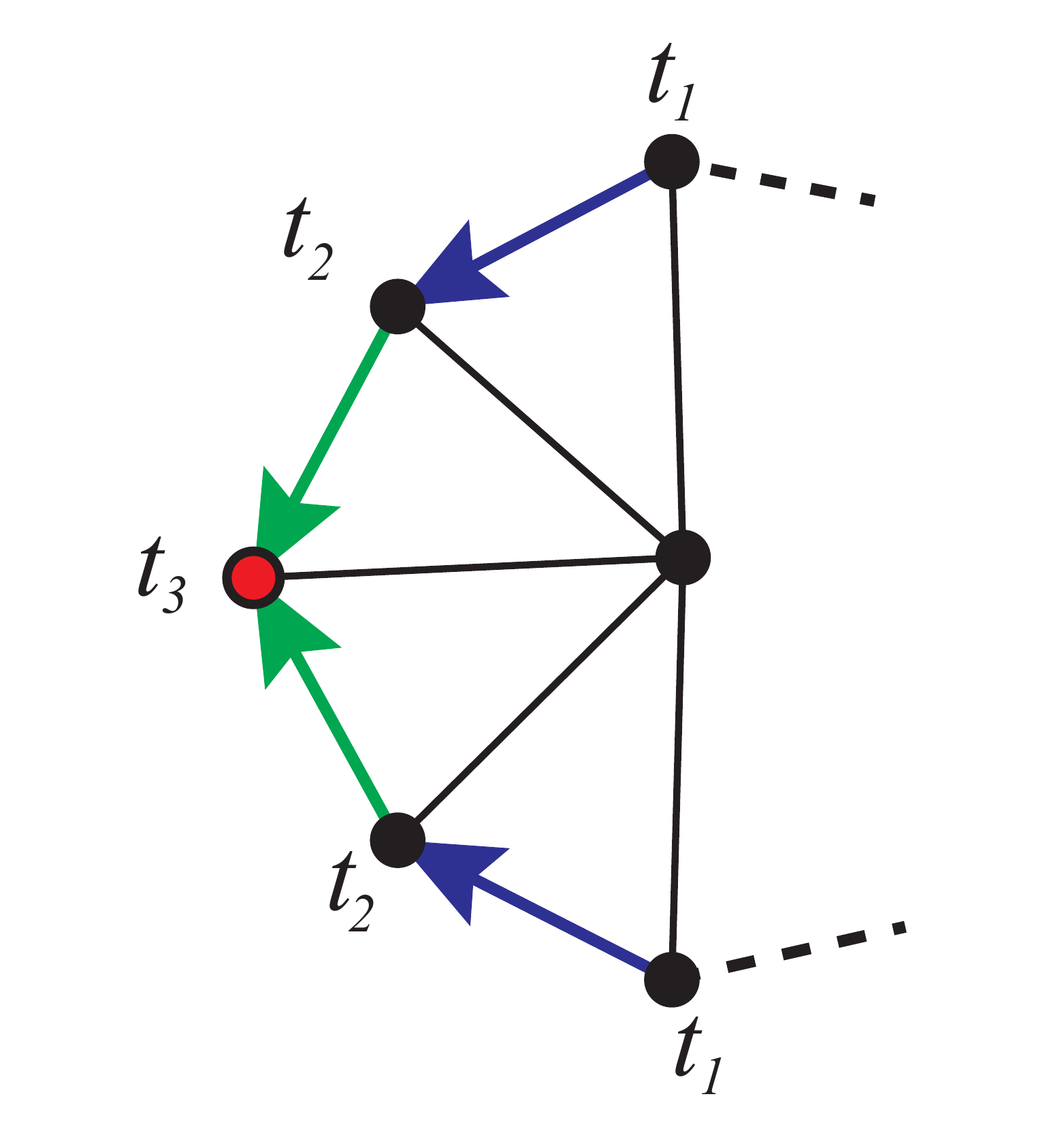} \hspace{.5in} \includegraphics[scale=.4]{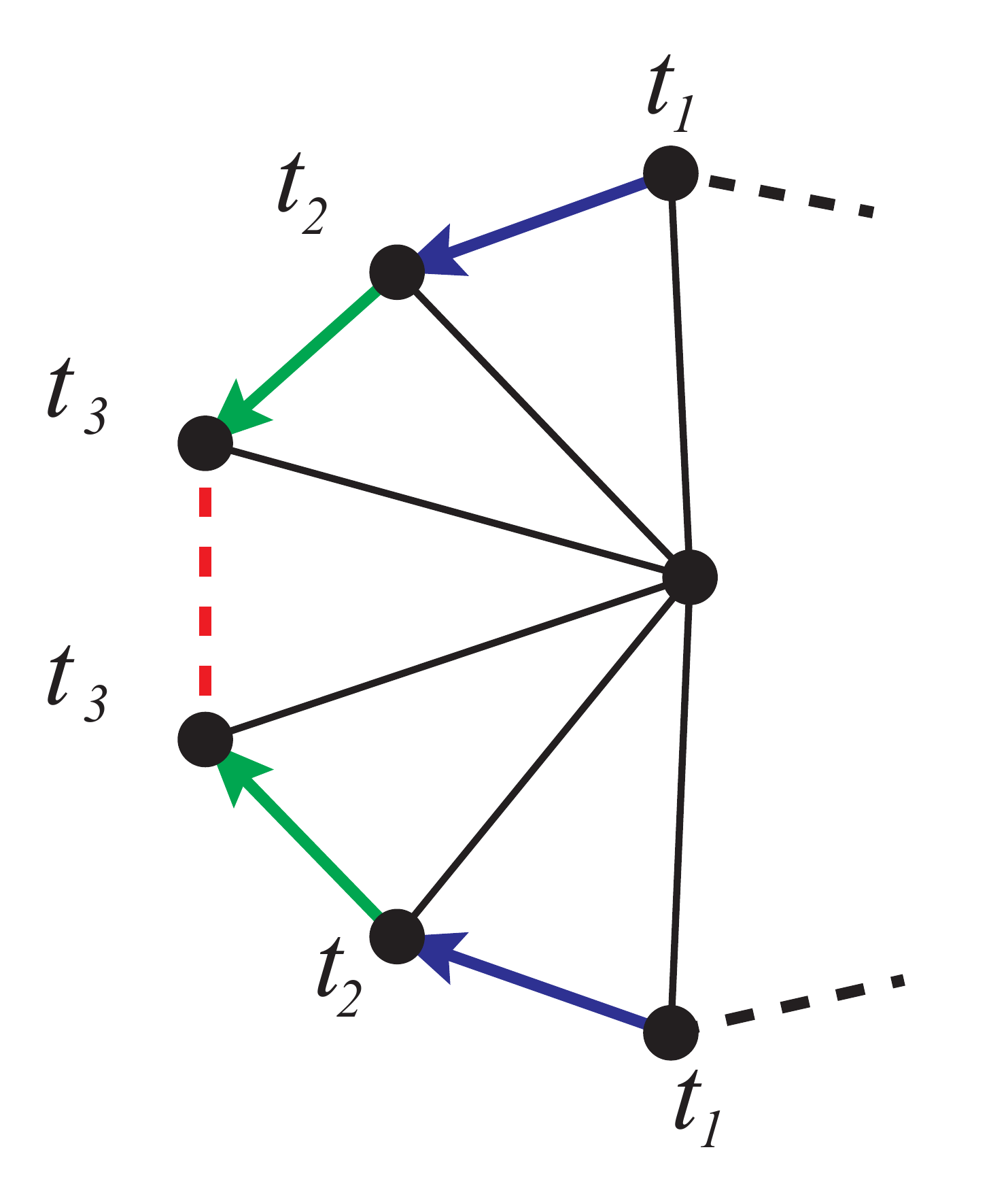}
	\caption{Two copies of $t_1$ are placed on the outer cycle. Paths along the outer cycle must be of odd or even length.}\label{Cyclepathfig}
\end{figure}

By Lemma \ref{lemmaj}, a single bond type, say $a$, must be used on both spoke edges. By the proof of Lemma \ref{cyclelemma}, each of the remaining two bond-edge types of $t_1$ must be oriented opposingly along the outer cycle (one bond-edge type, say $b$, is colored in blue in Figure \ref{Cyclepathfig}). Furthermore, since each bond-edge type has appeared twice, then by Lemma \ref{cyclelemma}, the two vertices adjacent to those of $t_1$ via the bond-edge type $b$ must be of the same tile type, say $t_2$. By the nature of the wheel graph, there must exist two paths between the two copies of $t_1$ whose union is exactly the outer cycle. 

If at least one such path has odd length, then we see on the right side of Figure \ref{Cyclepathfig} that applying Lemmas \ref{lemmaj} and  \ref{cyclelemma} to tile type $t_2$ and each subsequent tile type, eventually some tile type will be adjacent to itself, which violates Lemma \ref{lemma3}. This contradicts the assumption that the outer cycle of $W_n$ could be constructed using $\floor{\frac{n}{2}}$ tile types.

If both paths along the outer cycle are of even length, then $n$ must be odd. We see on the left side of Figure \ref{Cyclepathfig} that repeating the reasoning above, some tile type will be used exactly once, which contradicts the assumption that every tile could be used twice.

Alternatively, if the outer cycle of $W_n$ could be constructed in strictly less than $\floor{\frac{n}{2}}$ tile types, then at least one tile type would appear three or more times on a vertex on the outer cycle. Suppose without loss of generality that the tile type is $t=\{a, b, c\}$ (where $a, b, c$ may not necessarily be distinct bond-edge types). By Lemma \ref{lemma3}, tile type $t$ may not be adjacent to itself, so we know each instance of $t$ must be at least distance 2 from itself. To satisfy Lemma \ref{lemmaj}, some bond-edge type, say $a$, must be used on all the spoke edges. This leaves three edges on the outer cycle to be labeled with the bond-edge types $b$ and $c$, which violates Lemma \ref{cyclelemma}. Thus, we may not use fewer than $\floor{\frac{n}{2}}$ tile types to construct $W_n$.

Since each case shows that a pot using fewer than $\floor{\frac{n}{2}}+2$ tiles violates the conditions of Scenario 3, then $T_3(W_n) \geq \floor{\frac{n}{2}}+2$. Since the pots given in Equations (\ref{Scen3even}) and (\ref{Scen3odd}) achieve this lower bound, then $T_3(W_n) = \floor{\frac{n}{2}}+2$.
\end{proof}

\section{Conclusion}

In this paper we have presented the minimum number of distinct tile types and bond-edge types needed to construct a self-assembled DNA complex with a wheel graph structure. There are still several other infinite classes of graphs beyond wheel graphs to study, and we anticipate that the results presented here may aid future authors in finding similar results for graphs which contain $W_n$ as a subgraph.  

The minimum number of distinct tile and bond edge-types needed to construct the wheel graph in Scenarios 1 and 2 are not only equal, but also constant, i.e., they do not increase as a function of the number of vertices of the target graph. The  minima in Scenario 3 do increase as a function of $n$, which is to be expected, given the more stringent conditions that come with Scenario 3.  

 In Scenario 3, we established two lemmas that aided us in finding lower bounds for the minimum number of tile types and bond-edge types. We believe this method of finding restrictions on the construction of a given complex may be generalized or adapted to find more results for additional classes of graphs. 

\section{Acknowledgements}

This work was supported in part by the CSU San Bernardino Office of Student Research.


\newpage
\section*{References}
\bibliographystyle{plain} 
\bibliography{Bibliography}

\begin{thebibliography}{1}

\bibitem{2}
Leonard~M Adleman.
\newblock Molecular computation of solutions to combinatorial problems.
\newblock {\em Science}, 266(5187):1021--1024, 1994.

\bibitem{ellis2014minimal}
Joanna Ellis-Monaghan, Greta Pangborn, Laura Beaudin, David Miller, Nick Bruno,
  and Akie Hashimoto.
\newblock Minimal tile and bond-edge types for self-assembling dna graphs.
\newblock In {\em Discrete and Topological Models in Molecular Biology}, pages
  241--270. Springer, 2014.

\bibitem{43}
Hongzhou Gu, Jie Chao, Shou-Jun Xiao, and Nadrian~C Seeman.
\newblock A proximity-based programmable dna nanoscale assembly line.
\newblock {\em Nature}, 465(7295):202, 2010.

\bibitem{labean2007constructing}
Thom~H LaBean and Hanying Li.
\newblock Constructing novel materials with dna.
\newblock {\em Nano Today}, 2(2):26--35, 2007.

\bibitem{han}
Andre~V. Pinheiro, Dongran Han, William~M. Shih, and Hao Yan.
\newblock Challenges and opportunities for structural dna nanotechnology.
\newblock {\em Nature Nanotechnology}, 6(12):763--773, 2011.

\bibitem{80}
Nadrian~C Seeman.
\newblock An overview of structural dna nanotechnology.
\newblock {\em Molecular biotechnology}, 37(3):246, 2007.

\end{thebibliography}

\end{document}